\begin{document}
\theoremstyle{plain}
\newtheorem{definition}[equation]{Definition}
\newtheorem{lemma}{Lemma}
\newtheorem{theorem}{Theorem}
\newtheorem{metatheorem}[equation]{(Meta)-theorem}
\newtheorem{proposition}{Proposition}
\newtheorem{corollary}[equation]{Corollary}
\newtheorem{conjecture}[equation]{Conjecture}
\newtheorem{problem}{Problem}
\newtheorem{assumption}[equation]{Assumption}
\newtheorem{question}[equation]{Question}
\newtheorem{remark}{Remark}
\newtheorem{notation}[equation]{Notation}
\errorcontextlines=0

\newcommand{\R}{\mathbb{R}}
\newcommand{\e}{\varepsilon}
\renewcommand{\leq}{\leqslant}
\renewcommand{\geq}{\geqslant}

\theoremstyle{remark}
\newtheorem{note}[equation]{Note}
\newtheorem{example}[equation]{Example}

\title{From internal to pointwise control for the 1D heat equation and minimal control time}

\date{}

\author{Cyril Letrouit\footnote{Sorbonne Universit\'e, CNRS, Universit\'e de Paris, Inria, Laboratoire Jacques-Louis Lions (LJLL), F-75005 Paris, France (\texttt{letrouit@ljll.math.upmc.fr})}\ \footnote{DMA, \'Ecole normale sup\'erieure, CNRS, PSL Research University, 75005 Paris}}

\maketitle

\abstract{Our goal is to study controllability and observability properties of the 1D heat equation with internal control (or observation) set $\omega_{\varepsilon}=(x_{0}-\varepsilon, x_{0}+\varepsilon )$, in the limit $\varepsilon\rightarrow 0$, where $x_{0}\in (0,1)$. It is known that depending on arithmetic properties of $x_{0}$, there may exist a minimal time $T_{0}$ of pointwise control at $x_{0}$ of the heat equation. Besides, for any $\varepsilon$ fixed, the heat equation is controllable with control set $\omega_{\varepsilon}$ in any time $T>0$. We relate these two phenomena. We show that the observability constant on $\omega_{\e}$ does not converge to $0$ as $\e\rightarrow 0$ at the same speed when $T>T_{0}$ (in which case it is comparable to $\e^{1/2}$) or $T<T_{0}$ (in which case it converges faster to $0$). We also describe the behavior of optimal $L^{2}$ null-controls on $\omega_{\varepsilon}$ in the limit $\varepsilon \rightarrow 0$.}

\tableofcontents

\section{Introduction and main results}
\subsection{Motivations}

In this paper, we consider the controlled heat equation on $(0,1)$ with Dirichlet boundary conditions
\begin{equation} \label{heat}
\begin{cases}
\partial_{t}u-\partial_{xx} u(t,x)=f(t,x) \text{   in $(0,+\infty)\times (0,1)$} \\
u(\cdot ,0)=u(\cdot ,1)=0 \text{   on $(0,+\infty)$}, \\
u(0,\cdot)=u_{0} \text{   on (0,1)},
\end{cases}
\end{equation}
where $u_{0}(x)\in L^{2}(0,1)$ is the initial datum and $f(t,x)$ is the control. We will consider two cases in which (\ref{heat}) is known to be well-posed: 
\begin{itemize}
\item either $f\in L^{2}((0,T)\times (0,1))$;
\item or $f(t,\cdot )=\psi (t)\delta_{x_{0}}$ where $\psi\in L^{2}(0,T)$ and $x_{0}\in (0,1)$.  Here $\delta_{x_0}$ denotes the Dirac mass at $x_{0}$.
\end{itemize}

In the first case, well-posedness means that, for every $T>0$, there exists a constant $C>0$ such that for any $u_{0}\in L^{2}(0,1)$ and $f\in L^{2}((0,T)\times (0,1))$, there exists a unique solution $u\in C^{0}([0,T],L^{2}(0,1))\cap L^{2}((0,T),H_{0}^{1}(0,1))$ of (\ref{heat}), and this solution moreover satisfies 
\begin{equation*}
\| u\|_{C^{0}([0,T],L^{2}(0,1))}+\|u\|_{ L^{2}((0,T),H_{0}^{1}(0,1))} \leq C(\| u_{0}\|_{ L^{2}(0,1)} + \|f \|_{L^{2}((0,T)\times (0,1))}).
\end{equation*}
In the second case (see for example \cite[Proposition 6.1]{khodja2014minimal}), it means that, for every $T>0$, there exists a constant $C>0$ such that for any $u_{0}\in L^{2}(0,1)$ and $\psi\in L^{2}(0,T)$, there exists a unique solution $u\in C^{0}([0,T],L^{2}(0,1))\cap L^{2}((0,T),H_{0}^{1}(0,1))$ of (\ref{heat}) with $f(t,\cdot)=\psi (t)\delta_{x_{0}}$, and this solution moreover satisfies 
\begin{equation*}
\| u\|_{C^{0}([0,T],L^{2}(0,1))}+\|u\|_{ L^{2}((0,T),H_{0}^{1}(0,1))} \leq C(\| u_{0}\|_{ L^{2}(0,1)} + \|\psi \|_{L^{2}(0,T)}).
\end{equation*}

In this paper, what will be of interest is the case where $f$ is concentrated only on one point $x_{0}\in (0,1)$ (in this case we speak of \emph{pointwise} control at $x_{0}$) or on a small neighborhood of $x_{0}$ of the form $(x_{0}-\e,x_{0}+\e)$ for some small $\e>0$ (in this case we speak of \textit{internal} control). In the sequel, we fix a point $x_{0}\in (0,1)$.

Several results are known about exact observability (or, by duality, about exact controllability) of (\ref{heat}). In the sequel, by observability we always mean exact observability.

\begin{itemize}
\item By internal observability of (\ref{heat}) in time $T$ on an open  subset $E\subset (0,1)$, we mean that 
\begin{equation*}\label{obs}
C(T,E):=\inf \left\{ \int_{0}^{T}\int_{E}u(t,x)^{2}dxdt, \ \|u_{0} \|_{L^{2}(0,1)}=1, \ u \text{ solution of (\ref{heat}) with $f=0$}\right\} >0.
\end{equation*}
The constant $C(T,E)$ is called the observability constant on $E$ in time $T$.
\item By pointwise observability of (\ref{heat}) in time $T$ at a point $x_{0}\in (0,1)$, we mean that 
\begin{equation}\label{obspoint}
C(T,x_{0})=\inf \left\{ \int_{0}^{T}u(t,x_{0})^{2}dt, \ \|u_{0} \|_{L^{2}(0,1)}=1, \ u \text{ solution of (\ref{heat}) with $f=0$}\right\}>0.
\end{equation}
The constant $C(T,x_{0})$ is called the observability constant at point $x_{0}$ in time $T$.
\end{itemize}

By duality (see Lemma \ref{prop:duality}), observability in time $T$ of the heat equation on the open set $E$ is equivalent to the property that for all $u_{0}\in L^{2}(0,1)$, there exists $f\in L^{2}((0,T)\times (0,1))$ with support in $(0,T)\times E$ such that the solution $u$ of (\ref{heat}) satisfies $u(T,\cdot)=0$. In this case $f$ is called a \textit{null-control}. Similarly, pointwise observability of the heat equation at $x_{0}$ is equivalent to the property that for all $u_{0}\in L^{2}(0,1)$, there exists $\psi\in L^{2}(0,T)$ such that the solution $u$ of (\ref{heat}) with $f(t,\cdot )=\psi (t)\delta_{x_{0}}$ satisfies $u(T,\cdot)=0$.

Depending on the arithmetic properties of $x_{0}$ (mainly how well $x_{0}$ is approached by rational numbers), the heat equation may or may not be observable at point $x_{0}$ in time $T$. More precisely, we have the following result, due to \cite{dolecki1973observability} (see also \cite{khodja2014minimal}).
\begin{enumerate}
\item \textit{Given any $x_{0}\in (0,1)$, there exists $T_{0}\in [0,+\infty]$ such that if $T_0<+\infty$ and $T>T_{0}$, then the heat equation is pointwise observable at point $x_{0}$ in time $T$, and if $0<T<T_{0}\leq +\infty$, then it is not pointwise observable at point $x_{0}$ in time $T$.}
\end{enumerate} 
In the sequel, we adopt the natural convention that if $T_0=+\infty$, the inequality $T>T_0$ is never verified, even for $T=+\infty$. This means that if we write $T>T_0$, we also implicitely require that $T_0<+\infty$.

It is also known that on any open sub-interval of $(0,1)$, the heat equation is observable in any time $T>0$ (see, e.g., \cite{russell1978controllability}). In particular:
\begin{enumerate}[2.]
\item \textit{Given any $x_{0}\in (0,1)$, any $\e >0$ such that $(x_{0}-\e ,x_{0}+\e )\subset (0,1)$ and any $T>0$, the heat equation is observable on $(x_{0}-\e ,x_{0}+\e )$ in time $T$.}
\end{enumerate}
Our goal is to understand how these two phenomena are linked, most notably by studying the limit $\varepsilon \rightarrow 0$. How does a minimal time appear when the domain of observation shrinks, i.e. when $\varepsilon\rightarrow 0$? Is it related to the size of $L^{2}$-optimal null-controls in the limit $\e\rightarrow 0$? 

The appearance of a minimal time of control at $x_{0}$ can be intuitively understood in the following way. First assume that $x_{0}$ is a rational number, $x_{0}=p/q$ with $p\in \mathbb{N}$ and $q\in\mathbb{N}^{*}$. Then, for any time $T>0$, the initial datum $u_{0}=\sin (q\pi x)$ cannot be steered to $0$ by any control of the form $\psi(t)\delta_{x_{0}}$ with $\psi\in L^{2}(0,T)$. If $u$ denotes the solution of (\ref{heat}) with initial datum $u_{0}$, the quantity $\int_{0}^{T}u(t,x_{0})^{2}dt$ is equal to $0$, and therefore the heat equation is not pointwise observable at $x_{0}$. In this case, $T_0=+\infty$. If now $x_{0}$ is irrational but well approached by rational numbers, meaning that there exist sequences $(p_{k}),(n_{k})$ of integers such that $|x_{0}-p_{k}/n_{k}|$ is very small compared with $1/n_{k}$ (typically less than $e^{-Cn_{k}^{2}}$), then, by evaluating the quantity defining the observability constant (\ref{obspoint}) for the initial data $\sin (n_{k}\pi x)$, it is possible to prove that the observability constant is equal to $0$ for any $T>0$ (but the infimum defining the observability constant is not reached if $x_{0}\notin \mathbb{Q}$), so that $T_0=+\infty$.

It is also interesting to compute $T_0$ for $x_0$ an irrational algebraic number of degree $m$, that is a root of a polynomial of degree $m\geq 2$. Liouville's theorem on diophantine approximation states that in this case there exists a constant $c(x_0)$ such that $|x_0-p/n|>c(x_0)/n^m$ for all integers $p$ and $n$ where $n>0$. Therefore $|\sin(n\pi x_0)|\geq 2c(x_0)/n^{m-1}$ for any $n>0$. Hence, for any $T>0$,
\begin{equation*}
\sum_{n=1}^{\infty}\frac{\exp(-n^2\pi^2T)}{|\sin(n\pi x_0)|}\leq \frac{1}{2c(x_0)}\sum_{n=1}^{\infty}n^{m-1}\exp(-n^2\pi^2T)<+\infty,
\end{equation*} 
and according to the results of Dolecki recalled in Theorem \ref{th:dol} below, we get $T_0=0$.

In the existing literature, similar problems have been investigated. In \cite{fabre1994pointwise}, the authors study the convergence for the 1D wave equation of the $L^{2}$-optimal null-controls on a spatial interval $(x_{0}-\e,x_{0}+\e)$ and compute their blow-up rate. Our problem is somehow the same for the heat equation, but our situation is more intricate due to the appearance at the limit of a minimal control time. For the 1D heat equation, the cost of optimal controls on shrinking volume (i.e., at the limit $\e\rightarrow 0$) does not seem to have been studied. A different asymptotic question which has attracted much more attention is the cost of optimal controls in the limit $T\rightarrow 0$ for a fixed domain of observation, see \cite{laurent2018observability} for recent results in this direction. 

Let us also mention that the existence of a minimal time of control for parabolic equations has been studied a lot in the last few years. See for example \cite{ammar2011kalman} or \cite{khodja2014minimal}. However, it has apparently never been related to the blow-up of the cost of the null-controls in the limit $\e\rightarrow 0$ when the control is located in a thin domain of width $\e$, and this is precisely what we do in this paper for the 1D heat equation.

The specificity of our problem is that it is related to number theory, as already noted in \cite{dolecki1973observability}, since the main property which determines the cost of the optimal null-controls is how $x_{0}$ is approximated by rational numbers. The problem is tractable in dimension 1, but its extension to higher dimension is not easy. In some sense, the controllability at point $x_{0}$ of the heat equation is not a local problem but a global one: if the manifold $\Omega$ in which the heat equation evolves is deformed (even very far from $x_{0}$), the properties of controllability at point $x_{0}$ may change dramatically. Therefore, well-known methods such as Carleman estimates are not appropriate in this context since they are in some sense "local". To give an example, in \cite[Theorem 1.15]{laurent2018observability}, the authors have derived a lower bound on the observability constant of the heat equation in the limit $\e\rightarrow 0$ which is uniform in $x_0$, but the constants in this lower bound can probably be improved if we assume further arithmetic properties on the point $x_{0}$.

The main method we use to address this problem is the so-called moment method, which has been widely used to deal with the 1D heat equation since the seminal work \cite{fattorini1971exact}. See for example \cite{lissy2017cost} for recent results and an extensive bibliography.

The paper goes as follows. In Section \ref{SSec:mainresults} we state the main results of our paper. In Section \ref{SSec:perspectives}, we give some perspectives and open problems. Finally, in Section \ref{Sec:proofs}, we give the proofs.

\textit{Acknowledgment.} We warmly thank Emmanuel Tr\'elat for careful reading of earlier versions of this paper.

\subsection{Main results} \label{SSec:mainresults}

Our first main result is the following. It roughly says that the convergence of the internal observability constant to $0$ in the limit $\varepsilon \rightarrow 0$ is much faster when the heat equation is not pointwise observable at the limit point $x_{0}$ in time $T$ than when it is observable at $x_{0}$ in time $T$. Recall that $T_0\in [0,+\infty]$ has been defined above following the results of Dolecki \cite{dolecki1973observability}, and that it depends only on $x_0$.

\begin{theorem} \label{thrate}
Fix $x_{0}\in (0,1)$ and denote by $C(T,\e )$ the observability constant in time $T$ on the interval $(x_{0}-\e,x_{0}+\e)$. 
\begin{enumerate}
\item If $T>T_{0}$, then there exist constants $C_{1},C_{2}>0$ (depending on $T$) such that $C_{1}\e^{1/2}\leq C(T,\e) \leq C_{2}\e^{1/2}$.
\item If $T<T_{0}$, then there exist a sequence $\varepsilon_{k}\rightarrow 0$ and constants $C_{1}>0$ and $C_{2}>1/2$ (depending on $T$) such that $C(T,\varepsilon_{k})\leq C_{1}\varepsilon_{k}^{C_{2}}$.
\end{enumerate} 
\end{theorem}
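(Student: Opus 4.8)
\medskip
\noindent\emph{Proof plan.} I would prove both parts by the moment method, reducing observability on $\omega_\e=(x_0-\e,x_0+\e)$ to the size of the numbers $\int_{\omega_\e}\varphi_m^2$, where $\varphi_m(x)=\sqrt2\,\sin(m\pi x)$ and $\lambda_m=m^2\pi^2$ are the normalised Dirichlet eigenfunctions and eigenvalues, so that a free solution of \eqref{heat} reads $u(t,x)=\sum_{m\geq1}a_m e^{-\lambda_m t}\varphi_m(x)$ with $\|u(0)\|_{L^2(0,1)}^2=\sum_m a_m^2$. The hinge is the elementary identity $\int_{\omega_\e}\varphi_m(x)^2\,dx=2\e-\frac{\cos(2m\pi x_0)\sin(2m\pi\e)}{m\pi}$, from which I would extract the uniform two-sided bound
\begin{equation}\label{eq:planbound}
c_0\,\e\,\sin^2(m\pi x_0)\ \leq\ \int_{\omega_\e}\varphi_m(x)^2\,dx\ \leq\ 4\e\qquad(m\geq1,\ 0<\e\leq\e_0),
\end{equation}
with $c_0>0$ absolute: the upper bound is immediate, and for the lower bound, when $m\e\geq1$ one has $\int_{\omega_\e}\varphi_m^2\geq2\e-\tfrac1{m\pi}\geq\e$, while for $m\e<1$ a second-order expansion gives $\int_{\omega_\e}\varphi_m^2=4\e\sin^2(m\pi x_0)+O(\e(m\e)^2)$, and in the only delicate subcase $\sin^2(m\pi x_0)\lesssim(m\e)^2$ (i.e. $x_0$ within distance $o(\e)$ of a zero $k/m$ of $\sin(m\pi\cdot)$) one instead bounds $\int_{\omega_\e}\varphi_m^2$ below by the integral over $(k/m-\tfrac\e2,k/m+\tfrac\e2)\subset\omega_\e$, which is $\gtrsim\e(m\e)^2\gtrsim\e\sin^2(m\pi x_0)$. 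I would also use Dolecki's characterization $T_0=\limsup_m\lambda_m^{-1}\big(-\ln|\sin(m\pi x_0)|\big)$ (in particular $T_0<\infty$ forces $x_0\notin\mathbb Q$, hence $\sin(m\pi x_0)\neq0$ for all $m$).

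\smallskip
\noindent\emph{Part (1).} For the upper bound I would simply test the observability inequality on the free solution $u(t,x)=e^{\lambda_1(T-t)}\varphi_1(x)$, which has $\|u(T)\|_{L^2}=1$: by \eqref{eq:planbound}, $\int_0^T\!\!\int_{\omega_\e}u^2=\frac{e^{2\lambda_1T}-1}{2\lambda_1}\int_{\omega_\e}\varphi_1^2\leq\frac{2e^{2\lambda_1T}}{\lambda_1}\,\e$, giving $C(T,\e)\leq C_2\e^{1/2}$; this half uses nothing about $T_0$. The lower bound is the substance. By the duality of Lemma \ref{prop:duality} it suffices to bound the $L^2$ null-control cost on $\omega_\e$ in time $T$ by $C(T)\,\e^{-1/2}$. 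Pick $(q_m)\subset L^2(0,T)$ biorthogonal to $(e^{-\lambda_n t})$ (so $\|q_m\|_{L^2(0,T)}\leq C(T)e^{C'(T)\sqrt{\lambda_m}}$ by the classical estimates \cite{fattorini1971exact}), and, for $u_0=\sum a_m\varphi_m$, the control $f(t,x)=\mathbf 1_{\omega_\e}(x)\sum_m\gamma_m q_m(T-t)\varphi_m(x)$ with $\gamma_m=-a_me^{-\lambda_mT}/\int_{\omega_\e}\varphi_m^2$; using $\int_0^Te^{-\lambda_nr}q_m(r)\,dr=\delta_{nm}$ (which kills all cross terms $\int_{\omega_\e}\varphi_m\varphi_n$) one checks $u(T)=0$ and
\begin{equation}\label{eq:plancost}
\|f\|_{L^2((0,T)\times(0,1))}\ \leq\ \sum_m\frac{|a_m|e^{-\lambda_mT}\|q_m\|_{L^2(0,T)}}{\big(\int_{\omega_\e}\varphi_m^2\big)^{1/2}}\ \leq\ \|u_0\|_{L^2}\Big(\tfrac1{c_0\e}\sum_m\tfrac{e^{-2\lambda_mT}\,\|q_m\|_{L^2(0,T)}^2}{\sin^2(m\pi x_0)}\Big)^{1/2},
\end{equation}
by \eqref{eq:planbound}. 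Since $T>T_0$, fixing $T'\in(T_0,T)$ gives $|\sin(m\pi x_0)|\geq c_1e^{-T'\lambda_m}$ for all $m$, so the series is dominated by $\sum_m e^{-2(T-T')\lambda_m+2C'(T)\sqrt{\lambda_m}}<\infty$; hence the control cost is $\leq C(T)\e^{-1/2}$ and $C(T,\e)\geq C_1\e^{1/2}$. (Equivalently: convergence of $\sum_m e^{-2\lambda_mT}/\sin^2(m\pi x_0)$ is \emph{exactly} the condition $T>T_0$, and this is the mechanism by which pointwise observability at $x_0$ upgrades to internal observability on $\omega_\e$ with the sharp rate $\e^{1/2}$.)

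\smallskip
\noindent\emph{Part (2).} Since $T<T_0$, there are $\delta>0$ and $n_k\to\infty$ with $|\sin(n_k\pi x_0)|\leq e^{-(T+\delta)\lambda_{n_k}}$ (when $x_0\in\mathbb Q$ this holds with $\sin(n_k\pi x_0)=0$ and any $\delta$). Set $\e_k:=e^{-(T+\delta)\lambda_{n_k}}\to0$ and test the observability inequality on $v_k(t,x)=e^{\lambda_{n_k}(T-t)}\varphi_{n_k}(x)$, with $\|v_k(T)\|_{L^2}=1$. For $k$ large, $\max_{|x-x_0|\leq\e_k}|\sin(n_k\pi x)|\leq|\sin(n_k\pi x_0)|+\pi n_k\e_k\leq2e^{-(T+\delta/2)\lambda_{n_k}}$, so $\int_{\omega_{\e_k}}\varphi_{n_k}^2\leq4\e_k\big(\max_{|x-x_0|\leq\e_k}|\sin(n_k\pi x)|\big)^2\leq16\,e^{-(3T+2\delta)\lambda_{n_k}}$, whence
\begin{equation*}
\int_0^T\!\!\int_{\omega_{\e_k}}v_k^2\ =\ \frac{e^{2\lambda_{n_k}T}-1}{2\lambda_{n_k}}\int_{\omega_{\e_k}}\varphi_{n_k}^2\ \leq\ \frac{8}{\lambda_{n_k}}\,e^{-(T+2\delta)\lambda_{n_k}}\ \leq\ 8\,\e_k^{\,(T+2\delta)/(T+\delta)},
\end{equation*}
using $\e_k=e^{-(T+\delta)\lambda_{n_k}}$. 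This gives $C(T,\e_k)\leq C_1\e_k^{C_2}$ with $C_1=2\sqrt2$ and $C_2=\frac{T+2\delta}{2(T+\delta)}=\frac12+\frac{\delta}{2(T+\delta)}>\frac12$.

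\smallskip
\noindent\emph{Main obstacle.} The genuinely delicate step is the lower bound in (1): one must (i) keep the exact power $\e^{-1/2}$ in the duality between $C(T,\e)$ and the null-control cost; (ii) establish the uniform-in-$m$ lower bound \eqref{eq:planbound}, the regime $m\e\lesssim1$ with $x_0$ abnormally close to some $k/m$ being the only subtlety; and (iii) recognise that the series in \eqref{eq:plancost} converges precisely when $T>T_0$ — this is where the number-theoretic nature of the problem enters, through Dolecki's condition. The upper bound in (1) and all of (2) are comparatively soft, relying on a single explicit solution; in particular (2) shows that the $\e^{1/2}$ upper bound of (1) is not sharp once $T<T_0$.
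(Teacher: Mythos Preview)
Your proof is correct; the upper bound in (1) and all of (2) follow the paper's strategy of testing the observability inequality on a single well-chosen eigenfunction, with only cosmetic differences in the choice of $\e_k$ and the resulting exponent $C_2$. The lower bound in (1), however, is argued along a genuinely different and in fact cleaner route. The paper insists on \emph{scalar} controls $f(t)\chi_{\omega_{\e}}(x)$, so it must bound the moments $\int_{\omega_\e}\sin(n\pi x)\,dx$ from below; since these can vanish for unlucky $\e$, the paper first constructs (Lemmas~\ref{constseq} and~\ref{ineqsin}) a special sequence $(\e_j)$ along which $\bigl|\int_{\omega_{\e_j}}\sin(n\pi x)\,dx\bigr|\geq C\e_j\,|\sin(n\pi x_0)|\,e^{-\delta\lambda_n}$, and only at the end passes from this sequence to arbitrary $\e$ by monotonicity. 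By contrast, your $x$-dependent control $\mathbf 1_{\omega_\e}(x)\sum_m\gamma_m q_m(T-t)\varphi_m(x)$ exploits the time-biorthogonality of $(q_m)$ to annihilate all cross terms $\int_{\omega_\e}\varphi_m\varphi_n$, so the relevant quantities become $\int_{\omega_\e}\varphi_m^2$, which are strictly positive for \emph{every} $\e>0$; the uniform bound $\int_{\omega_\e}\varphi_m^2\geq c_0\,\e\sin^2(m\pi x_0)$ then follows directly from the identity $\int_{\omega_\e}\varphi_m^2=2\e\bigl(1-\cos(2m\pi x_0)\tfrac{\sin(2m\pi\e)}{2m\pi\e}\bigr)$, with no auxiliary sequence and no $e^{-\delta\lambda_m}$ loss. (Your case-analysis sketch of \eqref{eq:planbound} is slightly loose --- the ``$o(\e)$'' should read ``$\leq\e/2$'' so that $(k/m-\e/2,k/m+\e/2)\subset\omega_\e$ --- but the inequality is easily verified directly from the formula by writing $c=\cos(2m\pi x_0)=1-2\sin^2(m\pi x_0)$ and observing that $(1-cs)/(1-c)\geq\tfrac12$ for all $c\in[-1,1)$ and $s=\sin\theta/\theta$.) What your approach buys is a shorter, more transparent argument working for all $\e$ at once; what the paper's scalar approach buys is a control that is constant in $x$ on $\omega_\e$, closer in spirit to the Dolecki pointwise control it is meant to approximate.
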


\begin{remark} \label{rmk:duality} By duality, this theorem gives information on how, when $T>T_{0}$, for a fixed initial datum $u_{0}$, the norm of the $L^{2}$-optimal null-control $\psi_{\e}$ on $(x_{0}-\e,x_{0}+\e)$ behaves in the limit $\e\rightarrow 0$. It says that $\|\psi_{\e}\|_{L^{2}}$ is at most of the order of $C\e^{-1/2}$. To prove our results, we will sometimes make use of this duality between controllability and observability. We refer to Lemma \ref{prop:duality} for a precise statement on duality between controllability and observability.
\end{remark}

Our second result, which deals with the case $T\leq T_0$, gives a finer analysis of the behavior of the optimal null-control in the limit $\e\rightarrow 0$. Given an initial datum $u_{0}\in L^{2}(0,1)$ which is assumed to be not pointwise null-controllable at point $x_0$ in time $T$, we describe the behavior of the norm of the optimal control with control domain $(x_{0}-\e,x_{0}+\e)$ in the limit $\e\rightarrow 0$.

\begin{theorem} \label{initialdatum}
Let $x_0\in [0,1]$ and let $T\leq T_0$. We assume that $u_{0}\in L^{2}(0,1)$ is not pointwise null-controllable at $x_{0}$ in time $T$. Then the optimal $L^2$ null-control $\psi_{\varepsilon}$ in time $T$ with control domain $(x_{0}-\e,x_{0}+\e)$ of the heat equation with initial datum $u_0$ verifies $\e^{1/2}\|\psi_{\varepsilon}\|_{L^{2}}\rightarrow +\infty$.
\end{theorem}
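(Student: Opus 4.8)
\emph{Proof idea.} The plan is to argue by contradiction: assume that $\e^{1/2}\|\psi_{\e}\|_{L^2}$ does not tend to $+\infty$. Then there are a sequence $\e_k\to 0$ and a constant $M>0$ with
\[
\e_k^{1/2}\,\|\psi_{\e_k}\|_{L^2((0,T)\times\omega_{\e_k})}\leq M,\qquad \omega_{\e_k}:=(x_0-\e_k,x_0+\e_k),
\]
and the goal is to extract out of the internal null-controls $\psi_{\e_k}$ an honest pointwise null-control $\psi\in L^2(0,T)$ of $u_0$ at $x_0$ in time $T$, contradicting the hypothesis. Write $e_n(x)=\sqrt 2\sin(n\pi x)$ and $\lambda_n=n^2\pi^2$ for the Dirichlet eigenpairs on $(0,1)$. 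By the moment formulation of null-controllability (which follows from the well-posedness statements in the introduction, by writing the ODE satisfied by the Fourier coefficients $t\mapsto\langle u(t,\cdot),e_n\rangle$): a control $\psi\in L^2(0,T)$ steers $u_0$ to $0$ via $\psi(t)\delta_{x_0}$ iff $e_n(x_0)\int_0^T e^{\lambda_n t}\psi(t)\,dt=-\langle u_0,e_n\rangle$ for all $n\geq 1$; and $f\in L^2((0,T)\times\omega_{\e_k})$ is an internal null-control iff $\int_0^T e^{\lambda_n t}\langle f(t,\cdot),e_n\rangle\,dt=-\langle u_0,e_n\rangle$ for all $n\geq 1$.

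The candidate is the $x$-average $\widetilde\psi_k(t):=\int_{\omega_{\e_k}}\psi_{\e_k}(t,x)\,dx$. By Cauchy--Schwarz in $x$ and $|\omega_{\e_k}|\leq 2\e_k$,
\[
|\widetilde\psi_k(t)|\leq g_k(t):=\int_{\omega_{\e_k}}|\psi_{\e_k}(t,x)|\,dx\leq (2\e_k)^{1/2}\|\psi_{\e_k}(t,\cdot)\|_{L^2(\omega_{\e_k})},
\]
hence $\|\widetilde\psi_k\|_{L^2(0,T)}^2\leq\|g_k\|_{L^2(0,T)}^2\leq 2\e_k\|\psi_{\e_k}\|_{L^2((0,T)\times\omega_{\e_k})}^2\leq 2M^2$. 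Thus $(\widetilde\psi_k)$ is bounded in $L^2(0,T)$, and, along a subsequence, $\widetilde\psi_k\rightharpoonup\psi$ weakly in $L^2(0,T)$ for some $\psi\in L^2(0,T)$.

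Now pass to the limit in the moment identities. Since $\psi_{\e_k}\mathbf 1_{\omega_{\e_k}}$ is an internal null-control, $\int_0^T e^{\lambda_n t}\int_{\omega_{\e_k}}\psi_{\e_k}(t,x)e_n(x)\,dx\,dt=-\langle u_0,e_n\rangle$. Splitting $e_n(x)=e_n(x_0)+(e_n(x)-e_n(x_0))$ and using $|e_n(x)-e_n(x_0)|\leq\sqrt 2\,n\pi\,\e_k$ on $\omega_{\e_k}$ together with $\int_0^T g_k(t)\,dt\leq\sqrt{2T}\,M$, we get
\[
e_n(x_0)\int_0^T e^{\lambda_n t}\widetilde\psi_k(t)\,dt=-\langle u_0,e_n\rangle+r_n^{(k)},\qquad |r_n^{(k)}|\leq 2n\pi\sqrt T\,M\,e^{\lambda_n T}\,\e_k.
\]
Fix $n$ and let $k\to\infty$: since $t\mapsto e^{\lambda_n t}$ belongs to $L^2(0,T)$, the left-hand side converges to $e_n(x_0)\int_0^T e^{\lambda_n t}\psi(t)\,dt$, while $r_n^{(k)}\to 0$. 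Therefore $e_n(x_0)\int_0^T e^{\lambda_n t}\psi(t)\,dt=-\langle u_0,e_n\rangle$ for every $n\geq 1$ (when $e_n(x_0)=0$ the identity above becomes $0=-\langle u_0,e_n\rangle+r_n^{(k)}$ for all $k$, forcing $\langle u_0,e_n\rangle=0$). By the moment characterization, $\psi\in L^2(0,T)$ is then a pointwise null-control of $u_0$ at $x_0$ in time $T$, contradicting the assumption. The endpoint cases $x_0\in\{0,1\}$ are the same and even simpler: then $e_n(x_0)=0$ for every $n$, so the limiting identities force $\langle u_0,e_n\rangle=0$ for all $n$, i.e. $u_0=0$, which is trivially null-controllable, again a contradiction.

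The point needing most care is the bookkeeping in the defect estimate: one must fix $n$ \emph{before} letting $k\to\infty$, so that the factor $e^{\lambda_n T}$ (which blows up in $n$) is irrelevant, no uniformity in $n$ being ever used, the weak limit $\psi$ being produced once and for all. The remaining technical points are routine: existence and uniqueness of the optimal $L^2$ null-control $\psi_\e$ for each fixed $\e$ come from the null-controllability on $\omega_\e$ stated in the introduction, Lemma~\ref{prop:duality}, and the standard norm-minimisation argument; and the validity of the moment formulation for the measure-valued control $\psi(t)\delta_{x_0}$ follows from the quoted well-posedness, since $a_n(t):=\langle u(t,\cdot),e_n\rangle$ solves $a_n'+\lambda_n a_n=\psi(t)e_n(x_0)$ in $\mathcal D'(0,T)$, so that $u(T,\cdot)=0$ in $L^2(0,1)$ amounts to $a_n(T)=0$ for all $n$.
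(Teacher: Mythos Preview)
Your proof is correct and follows the same overall strategy as the paper: argue by contradiction, use the uniform bound to extract a weak limit, and show that this limit provides a pointwise null-control of $u_0$ at $x_0$ in time $T$.

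The technical route is somewhat different, however. The paper first rescales the controls to a fixed domain, $\varphi_\e(x,t)=\e\,\psi_\e\bigl(x_0+\tfrac{\e}{\delta}x,t\bigr)$, extracts a weak limit $\varphi$ in $L^2((0,T)\times(-\delta,\delta))$, and then sets $\psi(t)=\tfrac{1}{\delta}\int_{-\delta}^{\delta}\varphi(x,t)\,dx$; the null-control property is then verified by testing against an arbitrary solution $v$ of the backward heat equation. You instead average in $x$ immediately, $\widetilde\psi_k(t)=\int_{\omega_{\e_k}}\psi_{\e_k}(t,x)\,dx$, extract the weak limit directly in $L^2(0,T)$, and verify the null-control property mode by mode via the moment equations. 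Your version is more economical in this 1D setting and makes the role of the $\e^{1/2}$ scaling completely transparent through the single Cauchy--Schwarz step. The paper's duality argument, on the other hand, does not rely on an explicit eigenfunction expansion and is the one that transports more readily to the higher-dimensional situations mentioned in Section~\ref{SSec:perspectives}; it is also what the paper reuses verbatim to prove Theorem~\ref{thprof}.
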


\begin{remark}
Theorems \ref{thrate} and \ref{initialdatum} roughly indicate that, for a fixed initial datum $u_{0}\in L^{2}(0,1)$, the blow-up rate of the optimal null-controls $\psi_{\e}$ in the limit $\varepsilon\rightarrow 0$ determines the controllability at point $x_{0}$ and that the key quantity for measuring this rate is $\e^{1/2} \|\psi_{\e}\|_{L^{2}}$. 
\end{remark}

Our third result is a convergence result. Given a point $x_0$, an initial datum $u_0$ and assuming a uniform control of the quantity $\varepsilon^{1/2}\| \psi_\varepsilon\|_{L^2}$, where $\psi_\varepsilon$ is a null-control for $u_0$ in fixed time $T$ supported in $(x_0-\varepsilon,x_0+\varepsilon)$, we show that $\psi_\varepsilon$ converges in some sense to a pointwise null-control of $u_0$ at $x_0$ in time $T$.

\begin{theorem}\label{thprof}
Let $x_{0}\in (0,1)$ and $T>0$. Let $\delta>0$ be such that $(x_{0}-\delta, x_{0}+\delta)\subset (0,1)$ and let $u_{0}\in L^{2}(0,1)$ be an initial datum. For $0<\e <\delta$, we denote by $\psi_{\e}$ a null-control in time $T$ for $u_0$ of the heat equation with control domain $(x_{0}-\e,x_{0}+\e)$. We suppose that the quantity $\varepsilon^{1/2}\|\psi_\varepsilon\|_{L^2}$ is uniformly bounded in $\varepsilon$. Let $\varphi_{\e}(x,t)=\e\psi_{\e}\left( x_{0}+\frac{\e}{\delta}x,t \right) \in L^{2}((0,T)\times (-\delta,\delta))$. Then there exists $\varphi\in L^{2}((0,T)\times (-\delta,\delta))$ such that up to a subsequence $\varphi_{\e}\rightharpoonup \varphi$ weakly in $L^{2}((0,T)\times (-\delta,\delta))$ and $\psi (\cdot)=\frac{1}{\delta}\int_{-\delta}^{+\delta}\varphi (\cdot,x)dx\in L^{2}(0,T)$ is a pointwise null-control of $u_{0}$ at $x_{0}$ in time $T$.
\end{theorem}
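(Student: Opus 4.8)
The plan is to pass to the limit $\e\to 0$ in the weak (adjoint) formulation of null-controllability supplied by Lemma~\ref{prop:duality}. First I would record the effect of the rescaling: substituting $y=x_{0}+\tfrac{\e}{\delta}x$ in the definition of $\varphi_{\e}$ gives
\begin{equation*}
\|\varphi_{\e}\|_{L^{2}((0,T)\times(-\delta,\delta))}^{2}=\delta\,\e\,\|\psi_{\e}\|_{L^{2}}^{2},
\end{equation*}
which is bounded uniformly in $\e$ by hypothesis. Hence $(\varphi_{\e})$ admits a subsequence converging weakly in the Hilbert space $L^{2}((0,T)\times(-\delta,\delta))$ to some $\varphi$; setting $\psi(t)=\tfrac{1}{\delta}\int_{-\delta}^{\delta}\varphi(x,t)\,dx$, Cauchy--Schwarz gives $\|\psi\|_{L^{2}(0,T)}^{2}\leq\tfrac{2}{\delta}\|\varphi\|_{L^{2}}^{2}$, so $\psi\in L^{2}(0,T)$.

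Next, writing $v$ for the solution of the adjoint heat equation with final datum $v_{T}\in L^{2}(0,1)$, the fact that $\psi_{\e}$ is a null-control of $u_{0}$ in time $T$ on $(x_{0}-\e,x_{0}+\e)$ is expressed by the identity
\begin{equation*}
\langle u_{0},v(0,\cdot)\rangle_{L^{2}(0,1)}+\int_{0}^{T}\int_{x_{0}-\e}^{x_{0}+\e}\psi_{\e}(t,y)\,v(t,y)\,dy\,dt=0\qquad\text{for every }v_{T}\in L^{2}(0,1),
\end{equation*}
and the same change of variables rewrites the double integral as $\tfrac{1}{\delta}\int_{0}^{T}\int_{-\delta}^{\delta}\varphi_{\e}(x,t)\,v\big(t,x_{0}+\tfrac{\e}{\delta}x\big)\,dx\,dt$. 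I would then let $\e\to0$ along the extracted subsequence: the first term is independent of $\e$, and in the second I would combine the weak convergence $\varphi_{\e}\rightharpoonup\varphi$ with the \emph{strong} convergence in $L^{2}((0,T)\times(-\delta,\delta))$ of $(t,x)\mapsto v(t,x_{0}+\tfrac{\e}{\delta}x)$ towards $(t,x)\mapsto v(t,x_{0})$; since the product of a weakly convergent and a strongly convergent sequence converges to the product of the limits, this gives $\tfrac{1}{\delta}\int_{0}^{T}\int_{-\delta}^{\delta}\varphi(x,t)v(t,x_{0})\,dx\,dt=\int_{0}^{T}\psi(t)v(t,x_{0})\,dt$. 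We thus obtain $\langle u_{0},v(0,\cdot)\rangle+\int_{0}^{T}\psi(t)v(t,x_{0})\,dt=0$, which by Lemma~\ref{prop:duality} is precisely the statement that $\psi$ is a pointwise null-control of $u_{0}$ at $x_{0}$ in time $T$.

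The hard part is the strong $L^{2}$ convergence of the translated traces $v(\cdot,x_{0}+\tfrac{\e}{\delta}\cdot)$, because the adjoint solution is only $L^{2}$ in space near the final time $t=T$ and has no obvious trace there. I would circumvent this by first proving the limiting identity only for $v_{T}$ in the dense subspace of $L^{2}(0,1)$ spanned by finitely many eigenfunctions $\sin(n\pi x)$: for such data $v(t,x)=\sum_{n}c_{n}e^{-n^{2}\pi^{2}(T-t)}\sin(n\pi x)$ is smooth on $[0,T]\times[0,1]$, whence $v(t,x_{0}+\tfrac{\e}{\delta}x)\to v(t,x_{0})$ uniformly on $[0,T]\times[-\delta,\delta]$ and the strong convergence is immediate. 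I would then extend the identity $\langle u_{0},v(0,\cdot)\rangle+\int_{0}^{T}\psi(t)v(t,x_{0})\,dt=0$ from this dense set to all of $L^{2}(0,1)$ by continuity: $v_{T}\mapsto v(0,\cdot)$ is a contraction on $L^{2}(0,1)$ and $v_{T}\mapsto v(\cdot,x_{0})$ is bounded from $L^{2}(0,1)$ into $L^{2}(0,T)$ — the latter being exactly the admissibility estimate dual to the well-posedness of the pointwise-controlled heat equation recalled in the introduction — so that the left-hand side is a bounded linear functional of $v_{T}$ vanishing on a dense set, hence identically zero. (Alternatively one can avoid the density step by proving directly that $y\mapsto v(\cdot,y)\in L^{2}(0,T)$ is uniformly continuous near $x_{0}$ for arbitrary $v_{T}\in L^{2}(0,1)$, again through density plus the uniform admissibility bound; either way, this trace-continuity point is the only non-formal ingredient of the argument.)
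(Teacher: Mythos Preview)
Your argument is correct and follows the same overall strategy as the paper: rescale to obtain a bounded family $(\varphi_\e)$, extract a weak limit, and pass to the limit in the adjoint identity by splitting the double integral into a ``weak'' part (your $A$, tested against $v(t,x_0)$) and a ``remainder'' part (your $B$, involving $v(t,x_0+\tfrac{\e}{\delta}x)-v(t,x_0)$). The paper carries out exactly this $A+B$ decomposition in the proof of Theorem~\ref{initialdatum} and then simply refers back to it.

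The one genuine difference is how the remainder is handled. The paper bounds $B$ by a direct computation: it uses the Lipschitz estimate $|\sin(j\pi(x_0+y))-\sin(j\pi x_0)|\le j\pi|y|$ on the eigenfunction expansion of $v$, together with a factor $\|v\|_\infty$ to pass from the $L^2$ to the $L^1$ norm of the difference. You instead prove the limiting identity first for $v_T$ in the span of finitely many eigenfunctions (where $v$ is smooth up to $t=T$ and the strong convergence is immediate), and then extend to all $v_T\in L^2(0,1)$ by density, invoking the admissibility bound $\|v(\cdot,x_0)\|_{L^2(0,T)}\le C\|v_T\|_{L^2}$ dual to the well-posedness of the pointwise-controlled problem stated in the introduction. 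Your route is arguably cleaner: the paper's direct estimate uses $\|v\|_\infty$, which is not obviously finite for a general $v_T\in L^2(0,1)$ since the backward solution is only $L^2$ in space at $t=T$, so a density or cutoff step is implicitly needed there anyway. Either way, the substance of the proof is the same.
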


 According to Theorem \ref{thrate}, this result applies for example in case $T>T_0$ and $\psi_\varepsilon$ is an optimal null-control for any $\varepsilon >0$.

\subsection{Perspectives and open questions} \label{SSec:perspectives}
In this section, we gather several conjectures and open questions related to the problem addressed in this paper.
\begin{itemize}
\item In case $T>T_{0}$, we speculate that there exists a universal constant $K$ such that we have $\varepsilon^{-1/2}C(T,\varepsilon)\rightarrow KC(T,x_{0})\in (0,+\infty )$. 
\item In the case where $T<T_{0}$, we think that there exists $C>0$ (depending on $T$) such that for all $\e >0$, we have $C(T,\e)\geq C\e^{3/2}$. This exponent is the one obtained for example if $x_{0}=p/q$ is a rational number and we evaluate the observability constant at an associate eigenfunction $\sin(q\pi x)$.  The moment method cannot work to prove this conjecture (the infinite series defining the scalar control does not converge). The only way we see to tackle it is to use Carleman estimates, like in \cite[Theorem 1.15]{laurent2018observability}.
\item It is probably true that the limit control $\frac{1}{\delta}\int_{-\delta}^{\delta}\varphi (\cdot, x)dx$ obtained in Theorem \ref{thprof} is an optimal control for $u_{0}$ from point $x_{0}$ in time $T$. Moreover, Theorem \ref{thprof} might hold without any extraction of a subsequence.
\item It is of interest to extend our results to dimension $>1$, that is to understand the behavior of the observability constant of the heat equation in a manifold $\Omega$ of dimension $>1$ when the domain of observation shrinks to a point or a submanifold. The moment method cannot work anymore in this context (it is restricted to dimension 1 since it requires the convergence of $\sum 1/\lambda_{n}$, where the $\lambda_{n}$ denote the eigenvalues of the Laplacian) and therefore Theorem \ref{thrate} cannot be easily transposed to this higher-dimensional setting, but Theorems \ref{initialdatum} and \ref{thprof} generalize well. In dimension $>1$, nodal sets play a role similar to the role of rational points in 1D and it is probable that depending on how well a measurable set $E$ is approached by nodal sets, the heat equation may or may not be exactly observable on $E$ in time $T>0$. 
\end{itemize}

\section{Proofs} \label{Sec:proofs}

Before presenting the proofs of our results, we recall the following theorem of \cite{dolecki1973observability}, which is the starting point of our analysis.
\begin{theorem}\cite[Theorem 1]{dolecki1973observability} \label{th:dol}
\begin{enumerate}[(a)]
\item If the series $\sum_{n=1}^{\infty}\frac{\exp (-n^{2}\pi^{2}T)}{|\sin (n\pi x_{0})|}$ is convergent, then the heat equation is pointwise observable at $x_{0}$ for all $T'>T$.
\item If this series is divergent, the heat equation is not pointwise observable at $x_{0}$ for $T'<T$.
\end{enumerate}
\end{theorem}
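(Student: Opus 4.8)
\emph{Sketch of the intended proof.} The plan is to prove both implications through the duality of Lemma \ref{prop:duality}, which identifies pointwise observability at $x_0$ in time $T'$ with pointwise null-controllability at $x_0$ in time $T'$, and then to recast the latter as a moment problem to be solved by the moment method. Write $u_0=\sum_{n\geq 1}a_n e_n$ in the orthonormal basis $e_n(x)=\sqrt{2}\sin(n\pi x)$ of $L^2(0,1)$, with $-\partial_{xx}e_n=\lambda_n e_n$ and $\lambda_n=n^2\pi^2$. Projecting (\ref{heat}) with $f(t,\cdot)=\psi(t)\delta_{x_0}$ onto $e_n$ gives $u_n(T')=a_n e^{-\lambda_n T'}+\sqrt{2}\sin(n\pi x_0)\int_0^{T'}\psi(s)e^{-\lambda_n(T'-s)}\,ds$, so imposing $u(T',\cdot)=0$ and substituting $\tau=T'-s$, $\eta(\tau)=\psi(T'-\tau)$, reduces everything to finding $\eta\in L^2(0,T')$ with
\[
\int_0^{T'}\eta(\tau)e^{-\lambda_n\tau}\,d\tau=c_n:=-\frac{a_n e^{-\lambda_n T'}}{\sqrt{2}\sin(n\pi x_0)},\qquad n\geq 1.
\]

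For part (a), I would invoke the classical existence of a family $(q_n)_{n\geq 1}\subset L^2(0,T')$ biorthogonal to $(e^{-\lambda_n\cdot})_{n\geq1}$, together with the Fattorini--Russell type norm estimate: for every $\delta>0$ there is $C_\delta>0$ with $\|q_n\|_{L^2(0,T')}\leq C_\delta e^{\delta\lambda_n}$. This is available precisely because $\sum_n 1/\lambda_n<+\infty$ for $\lambda_n=n^2\pi^2$ (see \cite{fattorini1971exact,lissy2017cost}). Then $\eta=\sum_n c_n q_n$ formally solves the moment problem, so it suffices to prove $\sum_n |c_n|\,\|q_n\|_{L^2(0,T')}<+\infty$. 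Since the series $\sum_n e^{-\lambda_n T}/|\sin(n\pi x_0)|$ converges, its general term is bounded, hence $1/|\sin(n\pi x_0)|\leq M e^{\lambda_n T}$ for some $M>0$. Using $|c_n|\leq |a_n| e^{-\lambda_n T'}/(\sqrt{2}|\sin(n\pi x_0)|)$, the bound on $\|q_n\|$, and the Cauchy--Schwarz inequality, I would dominate $\sum_n|c_n|\|q_n\|$ by a constant times $\|u_0\|_{L^2}\big(\sum_n e^{2\lambda_n(T-T'+\delta)}\big)^{1/2}$. Choosing $\delta\in(0,T'-T)$ (possible since $T'>T$) makes this last series converge, so $\eta\in L^2(0,T')$ exists; as $u_0$ is arbitrary, the equation is pointwise observable at $x_0$ for every $T'>T$.

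For part (b), I would argue by contradiction: suppose the equation is pointwise null-controllable at $x_0$ in some time $T'<T$. By the closed graph theorem the map sending $u_0$ to its minimal $L^2$-norm pointwise control is bounded, which is equivalent (again by Lemma \ref{prop:duality}) to the observability inequality $\|u(T',\cdot)\|_{L^2}^2\leq C\int_0^{T'}u(t,x_0)^2\,dt$ for all free solutions $u$. Testing this against the single modes $u_0=e_n$ gives $e^{-2\lambda_n T'}\leq C\,\sin^2(n\pi x_0)\,(1-e^{-2\lambda_n T'})/\lambda_n\leq C\,\sin^2(n\pi x_0)/\lambda_n$, whence $e^{-\lambda_n T'}/|\sin(n\pi x_0)|\leq \sqrt{C/\lambda_n}\to 0$; in particular this quantity is bounded. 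But boundedness of $e^{-\lambda_n T'}/|\sin(n\pi x_0)|$ by some $M'$ would force $e^{-\lambda_n T}/|\sin(n\pi x_0)|\leq M' e^{-\lambda_n(T-T')}$, a summable majorant since $T-T'>0$, contradicting the assumed divergence of $\sum_n e^{-\lambda_n T}/|\sin(n\pi x_0)|$. Hence no $T'<T$ can work.

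The main obstacle is the quantitative input of part (a): the construction of the biorthogonal family with the sharp subexponential bound $\|q_n\|\leq C_\delta e^{\delta\lambda_n}$, which is the technical core of the moment method and relies crucially on $\sum_n 1/\lambda_n<+\infty$ (so the argument is genuinely one-dimensional). By contrast, the number-theoretic content of both directions is isolated in the elementary equivalence between convergence/divergence of $\sum_n e^{-\lambda_n T}/|\sin(n\pi x_0)|$ and boundedness/unboundedness of $e^{-\lambda_n T'}/|\sin(n\pi x_0)|$ for $T'$ on the appropriate side of $T$, which is exactly what couples the diophantine behaviour of $x_0$ to the control-theoretic threshold.
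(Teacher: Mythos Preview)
The paper does not prove this theorem: it is quoted verbatim from Dolecki \cite{dolecki1973observability} as the ``starting point of our analysis'' and used thereafter as a black box. There is therefore no proof in the paper to compare yours against.

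That said, your sketch is sound and is essentially the classical route. For (a) you reduce null-controllability to a moment problem and solve it with a biorthogonal family satisfying a sub\-exponential bound; this is exactly the machinery the paper deploys later in its own proof of the lower bound in Theorem~\ref{thrate} (see Lemma~\ref{lem:biortho}, which records the Fattorini--Russell estimate you invoke, in the sharper form $\|\psi_n\|_{L^2}\leq e^{Cn}$). So while the paper does not prove Theorem~\ref{th:dol}, your argument for (a) is in the same spirit as the paper's moment-method computations. For (b), testing the observability inequality on single eigenmodes is the natural and correct move; note only that the inequality you write, $\|u(T',\cdot)\|^2\leq C\int_0^{T'}u(t,x_0)^2\,dt$, is the final-state version dual to null-controllability, which is what is actually needed here (the paper's display (\ref{obspoint}) normalises $\|u_0\|$ instead, but the version you use is the operative one for the duality in Lemma~\ref{prop:duality}).

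One cosmetic remark: in (a) you only use boundedness of the general term $e^{-\lambda_nT}/|\sin(n\pi x_0)|$, not the full convergence of the series; that is all your argument needs, and it is consistent with the equivalence you highlight in the last paragraph.
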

As a corollary, we get the existence of a minimal time of control (denoted by $T_{0}$) for pointwise control at point $x_{0}$, as already recalled in the introduction.

An important point to compute the blow-up rate of observability constants is to remark that the size observability constant is related to the one of the minimal control of the associated control problem. We recall the following lemma, for which we took the formulation of \cite{coron2007control} although it is much older (see \cite{rudin1991functional} for example).
\begin{lemma} \cite[Proposition 2.16]{coron2007control}\label{prop:duality}
 Let $H_{1}$ and $H_{2}$ be two Hilbert spaces. Let $\mathcal{F}$ be a linear continuous map from $H_{1}$ into $H_{2}$. Then $\mathcal{F}$ is onto if and only if there exists $c>0$ such that
\begin{equation} \label{formalobsineq} \|\mathcal{F}^{*}(x_{2})\|_{H_{1}}\geq c\|x_{2}\|_{H_{2}}, \quad \forall x_{2}\in H_{2}.\end{equation}
Moreover, if (\ref{formalobsineq}) holds for some $c>0$, there exists a linear continuous map $\mathcal{G}$ from $H_{2}$ into $H_{1}$ such that
\[\mathcal{F}\circ \mathcal{G}(x_{2})=x_{2}, \quad \forall x_{2}\in H_{2},\]
\[\|\mathcal{G}(x_{2})\|_{H_{1}}\leq \frac{1}{c}\|x_{2}\|_{H_{2}}, \quad \forall x_{2}\in H_{2}.\]
In particular, if $\mathcal{F}$ is the input-output map, this relates the observability constant with the controllability one.
\end{lemma}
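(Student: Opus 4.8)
The plan is to establish the equivalence by proving the two implications separately, obtaining the operator $\mathcal{G}$ together with the sharp bound $1/c$ as a byproduct of the construction used in the first direction.

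First, assume the coercivity inequality (\ref{formalobsineq}) holds for some $c>0$. I would fix $x_{2}\in H_{2}$ and apply the Lax-Milgram theorem to the bilinear form $a(y,z)=\langle \mathcal{F}^{*}(y),\mathcal{F}^{*}(z)\rangle_{H_{1}}$ on $H_{2}$, which is continuous because $\mathcal{F}$ is bounded and coercive because (\ref{formalobsineq}) gives $a(y,y)=\|\mathcal{F}^{*}(y)\|_{H_{1}}^{2}\geq c^{2}\|y\|_{H_{2}}^{2}$. This produces a unique $\bar y\in H_{2}$ with $a(\bar y,z)=\langle x_{2},z\rangle_{H_{2}}$ for every $z$. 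Rewriting the left-hand side as $\langle \mathcal{F}\mathcal{F}^{*}(\bar y),z\rangle_{H_{2}}$ and letting $z$ range over $H_{2}$ yields $\mathcal{F}\mathcal{F}^{*}(\bar y)=x_{2}$, so that $x_{1}:=\mathcal{F}^{*}(\bar y)$ is a preimage of $x_{2}$, which already proves surjectivity. I would then set $\mathcal{G}(x_{2}):=\mathcal{F}^{*}(\bar y)$, with linearity of $\mathcal{G}$ following from the linearity of the Lax-Milgram solution operator and continuity from the norm bound derived below. The sharp constant comes from testing the identity with $z=\bar y$: this gives $\|x_{1}\|_{H_{1}}^{2}=\langle x_{2},\bar y\rangle_{H_{2}}\leq \|x_{2}\|_{H_{2}}\|\bar y\|_{H_{2}}$, and combining with $\|x_{1}\|_{H_{1}}=\|\mathcal{F}^{*}(\bar y)\|_{H_{1}}\geq c\|\bar y\|_{H_{2}}$ produces exactly $\|\mathcal{G}(x_{2})\|_{H_{1}}\leq \tfrac{1}{c}\|x_{2}\|_{H_{2}}$.

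For the converse, assume $\mathcal{F}$ is onto. I would first note that $\mathcal{F}^{*}$ is injective, since $\mathcal{F}^{*}(y)=0$ forces $\langle y,\mathcal{F}(x)\rangle_{H_{2}}=\langle \mathcal{F}^{*}(y),x\rangle_{H_{1}}=0$ for all $x\in H_{1}$, whence $y=0$ by surjectivity. Since $\mathrm{Ran}(\mathcal{F})=H_{2}$ is closed, the closed range theorem ensures that $\mathrm{Ran}(\mathcal{F}^{*})$ is also closed. Thus $\mathcal{F}^{*}$ is an injective bounded operator onto the complete subspace $\mathrm{Ran}(\mathcal{F}^{*})$, and the bounded inverse theorem (open mapping theorem) furnishes a constant $c>0$ with $\|\mathcal{F}^{*}(y)\|_{H_{1}}\geq c\|y\|_{H_{2}}$, which is (\ref{formalobsineq}).

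The functional-analytic machinery is routine; the delicate points I would watch are, in the first direction, the exact tracking of the constant $1/c$, since it is precisely this sharpness that makes the lemma quantitatively useful (in the application $\mathcal{F}$ is the input-output map, $\mathcal{F}^{*}$ the observation operator, $c$ the observability constant, and $1/c$ the control cost), and, in the converse, the essential use of completeness through the closed range and open mapping theorems.
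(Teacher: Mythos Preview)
Your argument is correct. Note, however, that the paper does not supply its own proof of this lemma: it is simply quoted from \cite[Proposition 2.16]{coron2007control} (with a further pointer to \cite{rudin1991functional}) and used as a black box, so there is no ``paper's proof'' to compare against. Your Lax--Milgram construction of $\mathcal{G}=\mathcal{F}^{*}(\mathcal{F}\mathcal{F}^{*})^{-1}$ with the sharp bound $1/c$, together with the closed range/open mapping argument for the converse, is exactly the standard route and matches what one finds in Coron's book.
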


\subsection{Proof of Theorem \ref{thrate}}

We successively prove Point 1 and Point 2 of Theorem \ref{thrate}. The proof of the lower bound of Point 1, which is the trickiest part, is done by modifying the Dolecki control in an $x$-independent way. For this, we essentially replace $\sin(n\pi x_0)$ by $\frac{1}{\varepsilon}\int_{x_0-\varepsilon}^{x_0+\varepsilon}\sin(n\pi x)dx$ and the key point (Lemma \ref{ineqsin}) is that, under suitable assumptions on $x_0$, $|\frac{1}{\varepsilon}\int_{x_0-\varepsilon}^{x_0+\varepsilon}\sin(n\pi x)dx|\geq c|\sin(n\pi x_0)|e^{-\delta n^2}$.

\paragraph{Point 1.}  For the upper bound, we proceed as follows. Since $x_{0}\notin \{ 0,1\}$, we know that $\sin(\pi x_0)\neq 0$ and therefore
\begin{eqnarray} C(T,\e)^{2}&\leq&\frac{2}{e^{-2\pi^{2}T}}\int_{0}^{T}\int_{x_{0}-\e}^{x_{0}+\e}e^{-2\pi^{2}t}\sin (\pi x)^{2}dxdt \nonumber\\
&\leq& \frac{e^{2\pi^{2}T}-1}{\pi^{2}}\int_{x_{0}-\e}^{x_{0}+\e}\sin (\pi x)^{2}dx\nonumber \\
& \sim & 2\e \frac{e^{2\pi^{2}T}-1}{\pi^{2}} \sin (\pi x_{0})^{2} \nonumber
\end{eqnarray}
when $\e\rightarrow 0$. Therefore, $C(T,\e)\leq C\e^{1/2}$, which proves the upper bound.

\paragraph{}

Following Remark \ref{rmk:duality} and Lemma \ref{prop:duality}, the proof of the lower bound consists roughly in proving an upper bound on the optimal null-controls $\psi_{\e}$ driving a given initial datum $u_{0}$ to $0$ in time $T$. In order to do so, we find a scalar null-control $\varphi_{\e}$ (in the sense that $\varphi_{\e}=b_{\e}(x)f_{\e}(t)$ with $\text{supp } b_{\e} \subset [x_{0}-\e,x_{0}+\e]$) which is not the optimal null-control but whose $L^{2}$ norm is of the same magnitude as the one of $\psi_{\e}$ in the limit $\e\rightarrow 0$. Said differently, for any $\e>0$ and any initial datum $u_{0}\in L^{2}(0,1)$, we find a scalar control $\varphi_{\e}$ on $[x_{0}-\e,x_{0}+\e]$ steering $u_{0}$ to $0$ and whose $L^{2}$ norm is bounded above by $C\e^{-1/2}\|u_0\|_{L^2}$ for some universal constant $C>0$ independent of $\e$ and of $u_{0}$.

As in \cite{fattorini1971exact}, for a fixed initial datum $u_{0}\in L^{2}(0,1)$ with Fourier decomposition $u_{0}(x)=\sum \mu_{n}\sin (n\pi x)$, we look for $\varphi_{\e}$ of the form $\varphi_{\e}=b_{\e}(x)f(t)$, with $b_{\e}(x)$ supported in $[x_{0}-\e,x_{0}+\e]$ and
\begin{equation*}
f(t)=\sum_{n=0}^{\infty}\frac{e^{-n^{2}\pi^{2}t}\mu_{n}}{\int_{x_{0}-\e}^{x_{0}+\e} b_{\e}(x)\sin(n\pi x)dx}\psi_{n}(t)
\end{equation*}
where $(\psi_{n})$ is a family of functions in $L^{2}(0,T)$ which is biorthogonal to the family of $L^{2}(0,T)$ functions $(e^{-n^{2}\pi^{2}t})$, meaning that  for $j,k\in\mathbb{N}$,
\begin{equation*}
\int_{0}^{T}\psi_{j}(t)e^{-k^{2}\pi^{2}t}dt = \delta_{jk}
\end{equation*}
with the Kronecker notation. 

Of course, this requires that the numbers $\int_{x_{0}-\e}^{x_{0}+\e} b_{\e}(x)\sin(n\pi x)dx$ are not too small (and in particular non-zero), so that $f\in L^{2}(0,T)$. In our construction, $b_{\e}(x)$ will be of the form $\chi_{[x_{0}-\e',x_{0}+\e']}$ for some well-chosen $\e/2\leq \e'\leq\e$, where the symbol $\chi$ denotes characteristic functions.

We now start the proof of the lower bound. It is based on several lemmas.

\begin{lemma} \label{lem:biortho}
There exists a family $(\psi_{n})_{n\in\mathbb{N}^{*}}\in L^{2}(0,T)$ biorthogonal to the family $e^{-n^{2}\pi^{2}t}$ and satisfying $\| \psi_{n}\|_{L^{2}}\leq e^{Cn}$ for every $n\in\mathbb{N}^{*}$.
\end{lemma}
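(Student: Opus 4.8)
The statement is the classical Fattorini--Russell lemma on uniform bounds for families biorthogonal to real exponentials, specialized here to the eigenvalues $\lambda_n=n^2\pi^2$ of $-\partial_{xx}$ on $(0,1)$ with Dirichlet conditions, so I would either quote it from one of the references already in play (e.g. \cite{fattorini1971exact}, \cite{lissy2017cost}, \cite{khodja2014minimal}) or reproduce the complex-analytic construction. For a self-contained argument, the first step is a reduction to a complex interpolation problem. A function $\psi\in L^2(0,T)$ is recovered from its Laplace transform $\widehat\psi(z)=\int_0^T\psi(t)e^{-zt}\,dt$, which is entire, lies in the Hardy class of the right half-plane $\{\mathrm{Re}\,z\geq 0\}$, satisfies $|\widehat\psi(z)|\leq Ce^{T|\mathrm{Re}\,z|}$ on $\{\mathrm{Re}\,z< 0\}$, and obeys $\|\widehat\psi(i\cdot)\|_{L^2(\R)}^2=2\pi\|\psi\|_{L^2(0,T)}^2$; conversely (Paley--Wiener) every entire function with these properties is such a Laplace transform. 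Since the biorthogonality relations $\int_0^T\psi_n(t)e^{-k^2\pi^2 t}\,dt=\delta_{nk}$ become the interpolation conditions $\widehat{\psi_n}(k^2\pi^2)=\delta_{nk}$, it suffices to produce, for each $n\geq 1$, an entire function $F_n$ with the above growth properties, with $F_n(k^2\pi^2)=\delta_{nk}$ for all $k\geq 1$, and with $\|F_n\|_{L^2(i\R)}\leq Ce^{Cn}$; then $\psi_n$ is its inverse Laplace transform.

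Next I would build the generating function. Because $\sum_{k}(k^2\pi^2)^{-1}<\infty$, the Weierstrass product $\Lambda(z)=\prod_{k\geq 1}\bigl(1-\tfrac{z}{k^2\pi^2}\bigr)=\tfrac{\sin\sqrt z}{\sqrt z}$ is entire of order $1/2$, with simple zeros exactly at the points $k^2\pi^2$, $k\geq 1$. The decisive quantitative fact is that $\Lambda'(n^2\pi^2)=\tfrac{(-1)^n}{2n^2\pi^2}$ is only \emph{polynomially} small; this is precisely what makes the exponent linear in $n$ (rather than worse) in the final estimate. However $\Lambda$ grows like $e^{c\sqrt{|z|}}$ off the positive real axis, in particular on the imaginary axis, so the naive candidate $\Lambda(z)/\bigl(\Lambda'(n^2\pi^2)(z-n^2\pi^2)\bigr)$, though it already takes the right values at the nodes, is not square-integrable on $i\R$ and does not have the correct half-plane growth.

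One repairs this by multiplying $\Lambda$ by an auxiliary entire function $M$, designed (as in \cite{fattorini1971exact}) so that
\[
F_n(z)=\frac{\Lambda(z)\,M(z)}{\Lambda'(n^2\pi^2)\,M(n^2\pi^2)\,(z-n^2\pi^2)}
\]
lands in the Paley--Wiener class of $L^2(0,T)$: this forces $M$ to tame the growth of $\Lambda$ on and off the imaginary axis, while keeping $|M(n^2\pi^2)|\geq ce^{-Cn}$ at the nodes so that the normalizing constant $1/|\Lambda'(n^2\pi^2)M(n^2\pi^2)|$ is $\leq Ce^{Cn}$ (here the polynomial lower bound on $\Lambda'$ is used). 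Granting such an $M$, one checks at once that $F_n(k^2\pi^2)=\delta_{nk}$ (the numerator vanishes at $k^2\pi^2$ for $k\neq n$, and L'H\^opital gives the value $1$ at $k=n$), that $F_n$ has the required growth, and that
\[
\|F_n\|_{L^2(i\R)}=\frac{1}{|\Lambda'(n^2\pi^2)|\,|M(n^2\pi^2)|}\,\Bigl\|\tfrac{\Lambda M}{\,\cdot\,-n^2\pi^2}\Bigr\|_{L^2(i\R)}\leq C\,n^2\,e^{Cn}\leq C'e^{C'n},
\]
using that $\|\Lambda M/(\cdot-n^2\pi^2)\|_{L^2(i\R)}$ is bounded uniformly in $n$. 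Inverting the Laplace transform then yields $\psi_n\in L^2(0,T)$ with $\|\psi_n\|_{L^2}\leq Ce^{Cn}$, biorthogonal to $(e^{-n^2\pi^2 t})$ by construction.

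The main obstacle is the explicit construction and estimation of the multiplier $M$, together with the Phragm\'en--Lindel\"of bookkeeping of the various half-plane bounds that certifies $\psi_n$ is supported in $(0,T)$ and not merely in $(-T,T)$; this is exactly the technical core of the Fattorini--Russell argument, which I would follow or simply cite. Everything else is routine, and the estimates leave a wide margin, since any bound of the form $Ce^{Cn}$ (with $C$ allowed to depend on $T$) is enough for the application — in particular the precise value of the constant $C$ plays no role in the sequel.
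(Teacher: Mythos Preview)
Your proposal is correct and takes essentially the same approach as the paper: both invoke the Fattorini--Russell construction from \cite{fattorini1971exact}. The paper's proof is simply a direct citation of the ready-made estimate (3.9) and Lemma~3.1 of \cite{fattorini1971exact} (which together give $\|\psi_n\|_{L^2}\leq Kn^2 e^{Cn+o(n)}$), whereas you sketch the complex-analytic machinery behind those estimates before ultimately deferring the multiplier construction to the same reference.
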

\begin{proof}
This result follows for example from results of \cite{fattorini1971exact}. By \cite[estimate (3.9)]{fattorini1971exact}, we know that there exists $K>0$ such that for all $n\in \mathbb{N}$,
\begin{equation} \label{estfat1}
\| \psi_{n} \|_{L^{2}(0,T)}\leq Kn^{2} \frac{\underset{j=1}{\overset{\infty}{\prod}}\left(1+\frac{n^{2}}{j^{2}}\right)}{\displaystyle\prod_{\stackrel{j=1}{j\neq n}}^{\infty}\left(1-\frac{n^{2}}{j^{2}}\right)}.
\end{equation}

By \cite[Lemma 3.1]{fattorini1971exact}, we know that 
\begin{equation} \label{estfat2}
\frac{\underset{j=1}{\overset{\infty}{\prod}}\left(1+\frac{n^{2}}{j^{2}}\right)}{\displaystyle\prod_{\stackrel{j=1}{j\neq n}}^{\infty}\left(1-\frac{n^{2}}{j^{2}}\right)}=\exp (Cn+o(n))
\end{equation}
as $n\rightarrow +\infty$. Combining (\ref{estfat1}) and (\ref{estfat2}), we get the proof of Lemma \ref{lem:biortho}.
\end{proof}

\begin{lemma} \label{constseq}
For all $\delta >0$, there exist $C>0$ and a sequence $(\e_{j})_{j\in\mathbb{N}}$ tending to $0$ and satisfying $\e_{j}\geq\e_{j+1}\geq \e_{j}/2$ and $\phi_{n}^{j}\geq C\e_{j}e^{-n^{2}\pi^{2}\delta}$ where $\phi_{n}^{j}=\inf \{ |\e_{j}-p/n|, p\in\mathbb{Z}\}$.
\end{lemma}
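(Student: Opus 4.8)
The plan is to build the sequence $(\e_j)_{j\in\mathbb{N}}$ of Lemma~\ref{constseq} by a nested‑interval argument, resting on a classical fact in quantitative dress: ``badly approximable'' reals are abundant, in the sense that the numbers one must exclude form a set of small and explicitly controllable Lebesgue measure. Fix $\delta>0$. Since $e^{-n^{2}\pi^{2}\delta}$ decays faster than any geometric progression, the constant $S:=\sum_{n\ge1}n\,e^{-n^{2}\pi^{2}\delta}$ is finite, and I would prove the lemma with any $C\in(0,1)$ small enough that $CS$ lies below a fixed numerical threshold; thus $C$ necessarily depends on $\delta$ (unavoidably, as $S\to+\infty$ when $\delta\to0$). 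Throughout, write $\phi_n(t):=\inf_{p\in\mathbb{Z}}|t-p/n|$, so $\phi_n^{j}=\phi_n(\e_j)$ and the goal is $\phi_n(\e_j)\ge C\e_j e^{-n^{2}\pi^{2}\delta}$ for every $n\ge1$.

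The core is a short measure estimate. For $0<b\le1$ and $n\ge1$, put $B_n(b):=\{t:\phi_n(t)<Cb\,e^{-n^{2}\pi^{2}\delta}\}$, a union of open intervals of length $2Cb\,e^{-n^{2}\pi^{2}\delta}$ centered at the points $p/n$, $p\in\mathbb{Z}$. If $I\subseteq(0,1]$ has supremum $b$, then an interval of $B_n(b)$ can meet $I$ only if its center $p/n$ is within $Cb\,e^{-n^{2}\pi^{2}\delta}\le C\le1$ of $I$; such centers lie in a fixed bounded neighborhood of $(0,1]$, so there are $O(n)$ of them, giving $|B_n(b)\cap I|\le \mathrm{const}\cdot Cbn\,e^{-n^{2}\pi^{2}\delta}$ and hence $\bigl|I\cap\bigcup_{n\ge1}B_n(b)\bigr|\le \mathrm{const}\cdot CbS$. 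Therefore, as soon as $|I|$ exceeds this last bound, $I$ contains a point $t$ with $\phi_n(t)\ge Cb\,e^{-n^{2}\pi^{2}\delta}\ge Ct\,e^{-n^{2}\pi^{2}\delta}$ for all $n\ge1$, the last inequality only because $t\le b$.

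It then remains to iterate. Applying the estimate to $I_0=[1/4,1/2]$ (length $1/4$, supremum $1/2$) yields $\e_0\in[1/4,1/2]$ with the desired property. Given $\e_j\in(0,1]$ satisfying it, I would apply the estimate to $I_{j+1}=[\e_j/2,\,3\e_j/4]\subseteq(0,1]$, which has length $\tfrac14\e_j$ and supremum $\tfrac34\e_j$, so the condition ``$|I_{j+1}|$ large enough'' reads $\mathrm{const}\cdot CS<1$ and holds by the choice of $C$ — crucially, both sides of the comparison scale linearly in $\e_j$, so the requirement on $C$ is independent of $j$. This produces $\e_{j+1}\in[\e_j/2,3\e_j/4]$ with $\phi_n(\e_{j+1})\ge C\,\tfrac34\e_j\,e^{-n^{2}\pi^{2}\delta}\ge C\e_{j+1}e^{-n^{2}\pi^{2}\delta}$ for every $n\ge1$. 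Since $\e_j/2\le\e_{j+1}\le\tfrac34\e_j\le\e_j$, the constraints $\e_j\ge\e_{j+1}\ge\e_j/2$ hold and $\e_j\le(3/4)^{j}\e_0\to0$, which finishes the construction.

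The one point needing care rather than inspection is the bookkeeping of scales: the weight in $B_n(b)$ must be anchored at $b=\sup I$, not at the running point $t$. This is precisely why each $I_{j+1}$ has to be placed to the left of $\e_j$ and be of length comparable to $\e_j$ — a short interval far from $0$ would fail the length‑versus‑measure comparison — and it is what lets the bound one genuinely obtains, $\phi_n(t)\ge Cb\,e^{-n^{2}\pi^{2}\delta}$, be weakened harmlessly to the required $\phi_n(t)\ge Ct\,e^{-n^{2}\pi^{2}\delta}$. Everything else (the constants $1/4$, $3/4$, the threshold on $CS$) is flexible. An equivalent, slightly cleaner packaging would isolate once and for all the statement that $G:=\{t>0:\phi_n(t)\ge Ct\,e^{-n^{2}\pi^{2}\delta}\ \forall\,n\ge1\}$ meets every $[a,b]\subseteq(0,1]$ with $(b-a)/b$ not too small, and then feed the intervals $[\e_j/2,3\e_j/4]$ into it.
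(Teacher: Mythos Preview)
Your argument is correct and is essentially the same as the paper's: both choose $C$ small in terms of $S=\sum_{n\ge1}(n+1)e^{-n^{2}\pi^{2}\delta}$, observe that the ``bad'' set $\{t:\phi_n(t)<C\e_j e^{-n^2\pi^2\delta}\text{ for some }n\}$ has Lebesgue measure a fixed fraction of $\e_j$, and iterate inside successive intervals of length comparable to $\e_j$. Your use of $I_{j+1}=[\e_j/2,3\e_j/4]$ in place of the paper's $(\e_j/2,\e_j)$ is a harmless cosmetic variant whose only effect is to make $\e_j\le(3/4)^j\e_0\to0$ automatic.
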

\begin{proof}
We construct $(\e_{j})_{j\in\mathbb{N}}$ iteratively. First we construct $\e_{0}\in (0,1)$. 

Set $C=\left( 4\underset{n}{\sum} (n+1)e^{-n^{2}\pi^{2}\delta}\right)^{-1}$. Define also for $n\in\mathbb{N}^{*}$
\begin{equation*}
U_{0,n}=\left\{ x\in [0,1], \quad \exists p\in\mathbb{Z}, \quad\left|x-\frac{p}{n} \right|< Ce^{-n^{2}\pi^{2}\delta}\right\}
\end{equation*}
and 
\begin{equation*}
U_{0}=\bigcup_{n\in\mathbb{N}^{*}} U_{0,n}.
\end{equation*}
We search $\e_{0}\in (0,1)\backslash U_{0}$. Denoting by $|E|$ the Lebesgue measure of a set $E$, we have $|U_{0,n}|\leq 2(n+1)Ce^{-n^{2}\pi^{2}\delta}$ and therefore $|U_{0}|\leq 2C\underset{n}{\sum} (n+1)e^{-n^{2}\pi^{2}\delta}=1/2$. Hence, we can pick $\e_{0}\in (0,1)\backslash U_{0}$. 

Let us now define $\e_{j}$ (for $j\geq 0$) iteratively. Suppose that $\e_{j}$ has been defined for some $j\geq 0$. We set
\begin{equation*}
U_{j+1,n}=\left\{ x\in \left(\frac{\e_{j}}{2},\e_{j}\right), \quad\exists p\in\mathbb{Z}, \quad\left|x-\frac{p}{n} \right|< C\e_{j}e^{-n^{2}\pi^{2}\delta}\right\} \qquad \text{for $n\in\mathbb{N}^{*}$}
\end{equation*}
and 
\begin{equation*}
U_{j+1}=\bigcup_{n\in\mathbb{N}^{*}} U_{j+1,n}.
\end{equation*}
We have $|U_{j+1,n}|\leq C\e_{j}^{2}(n+1)e^{-n^{2}\pi^{2}\delta}$, and therefore $|U_{j+1}|\leq \frac{1}{4}\e_{j}^{2}\leq \frac{\e_{j}}{4}$. Hence we can pick $\e_{j+1}\in (\frac{\e_{j}}{2},\e_{j})\backslash U_{j+1}$. 

This procedure defines recursively a sequence which satisfies the statement of Lemma \ref{constseq}.
\end{proof}

\begin{lemma} \label{ineqsin}
Fix $\delta>0$. For a sequence $\e_{j}$ constructed as in Lemma \ref{constseq}, there exists $C>0$ such that 
\begin{equation} \label{eq:ineqsin}
\left| \int_{x_{0}-\e_{j}}^{x_{0}+\e_{j}}\sin (n\pi x)dx\right| \geq  C\e_{j}|\sin(n\pi x_{0})|e^{-n^{2}\pi^{2}\delta}
\end{equation}
\end{lemma}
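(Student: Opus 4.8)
The plan is to compute the integral explicitly and then compare it with $\sin(n\pi x_0)$, using the arithmetic property of $\e_j$ provided by Lemma \ref{constseq}. First I would use the elementary primitive of $\sin$:
\begin{equation*}
\int_{x_0-\e_j}^{x_0+\e_j}\sin(n\pi x)\,dx=\frac{1}{n\pi}\bigl(\cos(n\pi(x_0-\e_j))-\cos(n\pi(x_0+\e_j))\bigr)=\frac{2}{n\pi}\sin(n\pi x_0)\sin(n\pi\e_j).
\end{equation*}
So the left-hand side of \eqref{eq:ineqsin} equals $\frac{2}{n\pi}|\sin(n\pi x_0)|\,|\sin(n\pi\e_j)|$, and the desired inequality reduces to a lower bound on $|\sin(n\pi\e_j)|$, namely $|\sin(n\pi\e_j)|\geq C'n\e_je^{-n^2\pi^2\delta}$ for a suitable constant (absorbing the $2/\pi$ into $C$).

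Next I would exploit the defining property of the sequence from Lemma \ref{constseq}: writing $\phi_n^j=\inf_{p\in\mathbb{Z}}|\e_j-p/n|$, we have $\phi_n^j\geq C\e_je^{-n^2\pi^2\delta}$. The point is that $|\sin(n\pi\e_j)|$ is controlled from below by the distance from $n\e_j$ to the nearest integer. Indeed, if $m$ is the integer nearest to $n\e_j$, then $|\sin(n\pi\e_j)|=|\sin(\pi(n\e_j-m))|$, and since $|n\e_j-m|\leq 1/2$ we can use the concavity estimate $|\sin(\pi s)|\geq 2|s|$ for $|s|\leq 1/2$ to get $|\sin(n\pi\e_j)|\geq 2|n\e_j-m|=2n\,\phi_n^j\geq 2Cn\e_je^{-n^2\pi^2\delta}$. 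Plugging this back gives $\left|\int_{x_0-\e_j}^{x_0+\e_j}\sin(n\pi x)\,dx\right|\geq\frac{4C}{\pi}\e_j|\sin(n\pi x_0)|e^{-n^2\pi^2\delta}$, which is exactly \eqref{eq:ineqsin} with a relabelled constant.

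One small point to check is that $\phi_n^j$ as defined is indeed $n$ times the distance from $n\e_j$ to $\mathbb{Z}$: $\inf_p|\e_j-p/n|=\frac{1}{n}\inf_p|n\e_j-p|=\frac{1}{n}\operatorname{dist}(n\e_j,\mathbb{Z})$. With that identification the chain of inequalities above is immediate, so there is essentially no serious obstacle here — the lemma is a routine consequence of the trigonometric identity and the combinatorial construction of Lemma \ref{constseq}. The only care needed is to make sure the constant $C$ in the statement is the one coming from Lemma \ref{constseq} times a harmless universal factor, and that the estimate $|\sin(\pi s)|\geq 2|s|$ on $[-1/2,1/2]$ is invoked correctly (it is sharp at $s=1/2$, which is why the half-integer spacing in Lemma \ref{constseq}, i.e. $\e_{j+1}\geq\e_j/2$, does not interfere).
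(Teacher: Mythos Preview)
Your proof is correct and considerably shorter than the paper's. The key difference is that you evaluate the integral explicitly via the product formula
\[
\int_{x_0-\e_j}^{x_0+\e_j}\sin(n\pi x)\,dx=\frac{2}{n\pi}\sin(n\pi x_0)\sin(n\pi\e_j),
\]
which immediately factorises the dependence on $x_0$ and on $\e_j$. After that, the lemma reduces to the single lower bound $|\sin(n\pi\e_j)|\geq 2n\phi_n^j$, which follows from $|\sin(\pi s)|\geq 2|s|$ on $[-1/2,1/2]$ and is then combined directly with the conclusion of Lemma~\ref{constseq}.

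The paper, by contrast, never writes this closed form. It introduces $\theta_n=\inf_p|x_0-p/n|$ in addition to $\phi_n^j$, splits into the cases $\e_j\leq\theta_n$ and $\e_j\geq\theta_n$, and in the second case carries out a further decomposition of the integration interval into three pieces, exploiting cancellation and sign properties of $\sin(n\pi x)$ to reduce to an integral over $(x_0-\phi_n^j,x_0+\phi_n^j)$, which is then bounded by yet another case split. All of this is doing by hand, through geometric reasoning about the graph of $\sin(n\pi x)$, what your trigonometric identity accomplishes in one line. Your route is strictly more efficient here; the paper's argument gains nothing from avoiding the explicit primitive, and in fact the identity you use makes transparent why only the arithmetic of $\e_j$ (and not of $x_0$) matters for this lemma.
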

\begin{proof}
We set $\theta_{n}=\inf \left\{ \left| x_{0}-\frac{p}{n}\right|, p \in \mathbb{Z} \right\} $ and $\phi_{n}^{j}=\inf\left\{ \left| \e_{j}-\frac{p}{n}\right|, p \in \mathbb{Z} \right\} $. We will keep these notations until the end of the proof of Theorem \ref{thrate}. Remark that $0\leq \theta_{n}\leq \frac{1}{2n}$ and  $0\leq \phi_{n}^{j}\leq \frac{1}{2n}$. In the sequel, we fix $j$ and $n$, and therefore we can write $\e_{j}=\frac{p}{n}\pm \phi_{n}^{j}$, omitting the dependence of $p$ in $j$ and $n$. There are two cases.

Let us first assume that $\e_{j}\leq \theta_{n}$. Then on $(x_{0}-\e_{j},x_{0}+\e_{j})$, the function $\sin(n\pi x)$ is of constant sign and therefore
\begin{equation*}
\left| \int_{x_{0}-\e_{j}}^{x_{0}+\e_{j}}\sin(n\pi x)dx\right|=\left| \int_{x_{0}-\e_{j}-p/n}^{x_{0}+\e_{j}-p/n}\sin(n\pi x)dx\right|\geq \left| \int_{x_{0}-\e_{j}-p/n}^{x_{0}+\e_{j}-p/n}2nxdx\right|
\end{equation*}
since $|\sin (\pi y)|\geq 2|y|$ for $|y|\leq 1/2$. Therefore
\begin{equation*}
\left| \int_{x_{0}-\e_{j}}^{x_{0}+\e_{j}}\sin(n\pi x)dx\right|\geq 4\e_{j}n\left| x_{0}-\frac{p}{n}\right|\geq  \frac{4}{\pi}\e_{j}|\sin (n\pi x_{0})|,
\end{equation*}
which proves that (\ref{eq:ineqsin}) holds in this case for $C=4/\pi$.

We now assume at the contrary that $\e_{j}\geq \theta_{n}$. We set $f(\e)=\int_{x_{0}-\e}^{x_{0}+\e}\sin (n\pi x)dx$. Then we can easily verify the following properties of $f$:
\begin{itemize}
\item If $\sin (n\pi x_{0})\geq 0$, then $f$ increases between $0$ and $1/2n$ and decreases between $1/2n$ and $1/n$. Moreover $f(0)=f(1/n)=0$.
\item If $\sin (n\pi x_{0})\leq 0$, then $f$ decreases between $0$ and $1/2n$ and increases between $1/2n$ and $1/n$. Moreover $f(0)=f(1/n)=0$.
\end{itemize}
Now we write
\begin{equation*}
\left| \int_{x_{0}-\e_{j}}^{x_{0}+\e_{j}}\sin(n\pi x)dx\right|=\left| \int_{x_{0}-\frac{p}{n}\mp \phi_{n}^{j}}^{x_{0}+\frac{p}{n}\pm \phi_{n}^{j}}\sin(n\pi x)dx\right|.
\end{equation*}
This last integral can be decomposed into three parts
\begin{equation*}
\int_{x_{0}-\frac{p}{n}\mp \phi_{n}^{j}}^{x_{0}+\frac{p}{n}\pm \phi_{n}^{j}}=\int_{x_{0}-\frac{p}{n}\mp \phi_{n}^{j}}^{x_{0}-\frac{p}{n}} + \int_{x_{0}-\frac{p}{n}}^{x_{0}+\frac{p}{n}} + \int_{x_{0}+\frac{p}{n}}^{x_{0}+\frac{p}{n}\pm \phi_{n}^{j}}.
\end{equation*}
The middle integral equals $0$ and the first one is also equal to $\int_{x_{0}+\frac{p}{n}\mp \phi_{n}^{j}}^{x_{0}+\frac{p}{n}}\sin(n\pi x)dx$. Finally we get
\begin{equation*}
\left| \int_{x_{0}-\e_{j}}^{x_{0}+\e_{j}}\sin(n\pi x)dx\right|=\left| \int_{x_{0}-\phi_{n}^{j}}^{x_{0}+\phi_{n}^{j}}\sin (n\pi x)dx\right|.
\end{equation*}
Let us finally prove that  
\begin{equation} \label{finalineq}
\left| \int_{x_{0}-\phi_{n}^{j}}^{x_{0}+\phi_{n}^{j}}\sin (n\pi x)dx\right|\geq C\phi_{n}^{j}|\sin (n\pi x_{0})|
\end{equation}
for some universal constant $C>0$. If $|\phi_{n}^{j}|\leq \theta_{n}$, as in the case $\e_{j}\leq \theta_{n}$, we easily get that (\ref{finalineq}) holds for $C=4/\pi$. If $\theta_{n}\leq \phi_{n}^{j}\leq 1/(2n)$, then we can suppose for example that $\sin (n\pi x_{0})\geq 0$. The case $\sin (n\pi x_{0})\leq 0$ can be handled similarly. The integral $\int_{x_{0}-\phi_{n}^{j}}^{x_{0}+\phi_{n}^{j}}\sin (n\pi x)dx$ is decomposed into
\begin{equation*}
\int_{x_{0}-\phi_{n}^{j}}^{x_{0}+\phi_{n}^{j}}=\int_{x_{0}-\phi_{n}^{j}}^{\frac{p}{n}}+\int_{\frac{p}{n}}^{2\frac{p}{n}-x_{0}+\phi_{n}^{j}}+\int_{2\frac{p}{n}-x_{0}+\phi_{n}^{j}}^{x_{0}+\phi_{n}^{j}}.
\end{equation*}
The first two integrals compensate and therefore
\begin{equation*}
\left|\int_{x_{0}-\phi_{n}^{j}}^{x_{0}+\phi_{n}^{j}}\sin (n\pi x)dx\right|=\left|\int_{2\frac{p}{n}-x_{0}+\phi_{n}^{j}}^{x_{0}+\phi_{n}^{j}}\sin (n\pi x)dx\right|.
\end{equation*}
The integral at the right-hand side has bounds $2\frac{p}{n}-x_{0}+\phi_{n}^{j}$ and $x_{0}+\phi_{n}^{j}$, between which $\sin (n\pi x)$ is positive. Note that for any $a$ such that $\sin (n\pi a)>0$ and any $b$ such that $\sin (n\pi x)$ is positive on $(a-b,a+b)$, we have $\int_{a-b}^{a+b}\sin(n\pi x)dx\geq b\sin (n\pi a)$. Applying this with $a=2\frac{p}{n}-x_{0}+\phi_{n}^{j}$ and $b=2(x_{0}-\frac{p}{n})$, we get
\begin{eqnarray}
\left|\int_{x_{0}-\phi_{n}^{j}}^{x_{0}+\phi_{n}^{j}}\sin (n\pi x)dx\right|&=&\left|\int_{2\frac{p}{n}-x_{0}+\phi_{n}^{j}}^{x_{0}+\phi_{n}^{j}}\sin (n\pi x)dx\right| \nonumber \\ 
&\geq& \left|\sin\left(n\pi \left(\frac{p}{n}+\phi_{n}^{j}\right)\right)\right|\left|x_{0}-\frac{p}{n}\right|  \nonumber \\ 
&\geq& 2n\phi_{n}^{j}\left|x_{0}-\frac{p}{n}\right| \text{        because $|\sin (x)|\geq \frac{2}{\pi}|x|$ for $|x|\leq \frac{\pi}{2}$}\nonumber \\
&\geq& \frac{2}{\pi}\phi_{n}^{j}|\sin(n\pi x_{0})| \nonumber \\
&\geq& C\e_{j}|\sin(n\pi x_{0})|e^{-n^{2}\pi^{2}\delta}  \nonumber
\end{eqnarray}
where we have used Lemma \ref{constseq}. This concludes the proof of Lemma \ref{ineqsin}.
\end{proof}

\begin{proof}[End of the proof of the lower bound]
We first prove that there exists $C>0$ such that for all $j\in \mathbb{N}$, we have $C\e_{j}^{1/2}\leq C(T,\e_{j})$. It will imply the lower bound of point 1 of Theorem \ref{thrate} for a particular sequence of $\e$, namely the sequence $(\e_{j})$. Fix $j\in\mathbb{N}$. Following \cite{fattorini1971exact}, we look for a control $\varphi_{\e_{j}}$ in the scalar form $\varphi_{\e_{j}}=f(t)\chi_{[-\e_{j},\e_{j}]}$ where $\chi$ denotes the characteristic function. We take 
\begin{equation*}
f(t)=\underset{n}{\sum} \frac{\mu_{n}e^{-n^{2}\pi^{2}T}}{\int_{x_{0}-\e_{j}}^{x_{0}+\e_{j}}\sin (n\pi x)dx}\psi_{n}(t).
\end{equation*}
Then 
\begin{equation*}
\| f(t)\|_{L^{2}(0,T)}\leq \underset{n}{\sum} \frac{|\mu_{n}|e^{-n^{2}\pi^{2}T}}{\left|\int_{x_{0}-\e_{j}}^{x_{0}+\e_{j}}\sin (n\pi x)dx\right|}\|\psi_{n}\|_{L^{2}(0,T)}.
\end{equation*}
Since $T>T_{0}$, by Theorem \ref{th:dol}, we can pick $\delta >0$ so that $\sum \frac{e^{-n^{2}\pi^{2}(T-2\delta)}}{|\sin(n\pi x_{0})|} <+\infty.$ This implies in particular
\begin{equation} \label{eq:convser}
\underset{n}{\sum} \frac{e^{-2n^{2}\pi^{2}(T-2\delta)}}{|\sin(n\pi x_{0})|^{2}} <+\infty.
\end{equation}
For this $\delta >0$, we take a sequence $(\varepsilon_{j})_{j\in\mathbb{N}}$ as in Lemma \ref{constseq}. We get, following Lemma \ref{ineqsin} and Lemma \ref{lem:biortho}:
\begin{equation*}
\| f(t)\|_{L^{2}(0,T)}\leq \frac{C}{\e_{j}} \underset{n}{\sum} \frac{|\mu_{n}|e^{-n^{2}\pi^{2}(T-\delta)}}{|\sin(n\pi x_{0})|}\|\psi_{n}\|_{L^{2}(0,T)}\leq \frac{C}{\e_{j}} \underset{n}{\sum} \frac{|\mu_{n}|e^{-n^{2}\pi^{2}(T-2\delta)}}{|\sin(n\pi x_{0})|}.
\end{equation*}
Using the Cauchy-Schwarz inequality, recalling that $\|u_{0}\|_{L^{2}}^{2}=\underset{n}{\sum} |\mu_{n}|^{2}$ and $\|\varphi_{\e_{j}}\|_{L^{2}((0,T)\times (0,1))}=\sqrt{2}\e_{j}^{1/2}\|f\|_{L^{2}(0,T)}$, we finally get 
\begin{equation*}
\| \psi_{\e_{j}}\|_{L^{2}}\leq \| \varphi_{\e_{j}}\|_{L^{2}}\leq \frac{C}{\e_{j}^{1/2}}\left(\underset{n}{\sum}  \frac{e^{-2n^{2}\pi^{2}(T-2\delta)}}{|\sin(n\pi x_{0})|^{2}}\right)^{1/2}\|u_{0}\|_{L^{2}}\leq \frac{C}{\e_{j}^{1/2}}\|u_{0}\|_{L^{2}}
\end{equation*}
because of (\ref{eq:convser}). By Lemma \ref{prop:duality}, we get that $C(T,\e_{j})\geq C\e_{j}^{1/2}$.

We have established the lower bound of point 1 of Theorem \ref{thrate} for the sequence $(\e_{j})$, but we must now deal with all $\e \in (-\delta ,\delta)$. We fix $\e \in (-\delta ,\delta)$ and $\e_{j}$ so that $\e/2\leq \e_{j}\leq \e$ which is possible by construction of the sequence $(\e_{j})$. Then the optimal null-control $\psi_{\e_{j}}$ is equal to $0$ outside $(x_{0}-\e_{j},x_{0}+\e_{j})$, and therefore it is also equal to $0$ outside $(x_{0}-\e,x_{0}+\e)$. We denote by $\psi_{\e}$ the optimal null-control on $(x_{0}-\e,x_{0}+\e)$. We have
\begin{equation*}
\e\| \psi_{\e}\|^{2}_{L^2}\leq 2\e_{j} \|\psi_{\e}\|^{2}_{L^2}=2\e_{j} \|\psi_{\e_{j}}\|^{2}_{L^2}\leq 2C.
\end{equation*}
Therefore the lower bound for the observability constant holds with $C$ replaced by $C/2$.
\end{proof}

\paragraph{Point 2.}
By Theorem \ref{th:dol}, since $T<T_{0}$, there exist $\delta >0$ and an increasing sequence $(n_{k})_{k\in \mathbb{N}}$ with $n_{k}\rightarrow +\infty$ when $k\rightarrow +\infty$ such that 
\begin{equation*}
|\sin (n_{k}\pi x_{0})|\leq e^{-n_{k}^{2}\pi^{2}(T+\delta)}.
\end{equation*}
Let us recall that $\theta_{n_{k}}=\inf \left\{ \left|x_{0}-\frac{p}{n_{k}}\right|, p \in \mathbb{Z}\right\}$ is the best approximation of $x_{0}$ by fractions with denominator $n_{k}$. Since $|x|\leq |\sin (\pi x)|/2$ for $x\in [-1 /2,1 /2]$, we have for a $p$ reaching the infimum in the definition of $\theta_{n_{k}}$:
\begin{equation*}
\theta_{n_{k}}=\frac{1}{n_{k}}|n_{k}x_{0}-p|\leq \frac{1}{2n_{k}}|\sin (n_{k}\pi x_{0}-p\pi)|\leq \frac{1}{2n_{k}}e^{-n_{k}^{2}\pi^{2}(T+\delta)}\leq e^{-n_{k}^{2}\pi^{2}(T+\delta)}.
\end{equation*}
Therefore, 
\begin{equation}\label{boundnk}
n_{k}^{2}\leq \frac{-\log \theta_{n_{k}}}{\pi^{2}(T+\delta)}.
\end{equation}
We set $\e_{k}=\theta_{n_{k}}$. Clearly, $\lim \e_{k} = 0$ when $k\rightarrow +\infty$ and we have 
\begin{equation*}
C(T,\e_{k})^{2}\leq \frac{e^{2n_{k}^{2}\pi^{2}T}-1}{2n_{k}^{2}\pi^{2}}\left|\int_{p/n_{k}}^{x_{0}+\theta_{n_{k}}}\sin (n_{k}\pi x)^{2}dx\right| \leq \frac{e^{2n_{k}^{2}\pi^{2}T}}{2n_{k}^{3}\pi^{2}}\int_{0}^{2n_{k}\theta_{n_{k}}}\sin (\pi y)^{2}dy.
\end{equation*}
Using that $|\sin (x)|\leq |x|$, we get 
\begin{equation*}
C(T,\e_{k})^{2}\leq \frac{e^{2n_{k}^{2}\pi^{2}T}}{2n_{k}^{3}\pi^{2}}\frac{\pi^{2}(2n_{k}\theta_{n_{k}})^{3}}{6}=\frac{2}{3}e^{2n_{k}^{2}\pi^{2}T}\theta_{n_{k}}^{3}
\end{equation*}
We can bound this expression by above using (\ref{boundnk}), and we get
\begin{equation*}
C(T,\e_{k})^{2}\leq \frac{2}{3}e^{2\pi^{2}T\left(  \frac{-\log \theta_{n_{k}}}{\pi^{2}(T+\delta)}\right)}\theta_{n_{k}}^{3} = \frac{2}{3}\e_{k}^{3-2T/(T+\delta)}.
\end{equation*}
Finally, we have
\begin{equation*}
C(T,\e_{k})^{2}\leq \sqrt{\frac{2}{3}}\e_{k}^{\frac{1}{2}+\frac{\delta}{T+\delta}}.
\end{equation*}
Setting $C_{2}=\frac{1}{2}+\frac{\delta}{T+\delta}$, we get the upper bound.

\subsection{Proof of Theorem \ref{initialdatum}}
We proceed by contradiction and assume that there exists $C>0$ and a sequence $(\e_{j})_{j\in\mathbb{N}}$, $\e_j\rightarrow 0$ such that $\e_j \|\psi_{\e_j}\|_{L^2}^{2}\leq C$. In the sequel, we omit index $j$. Let $\delta>0$ such that $(x_{0}-\delta ,x_{0}+\delta)\subset (0,1)$. For $x\in (x_{0}-\delta , x_{0}+\delta)$ and almost all $t\in (0,T)$ we set
\[ \varphi_{\e}(x,t)=\e \psi_{\e}\left( x_{0}+\frac{\e}{\delta}x,t\right)\]
with $\varphi_{\e}\in L^{2}((0,T)\times (-\delta,\delta))$.
Then for $0<\varepsilon < \delta$, we have
\[ \int_{0}^{T}\int_{-\delta}^{\delta}\varphi_{\e}(x,t)^{2}dxdt = \int_{0}^{T}\int_{-\delta}^{\delta}\e^{2}\psi_{\e}\left(x_{0}+\frac{\e}{\delta}x,t \right)^{2}dxdt=\delta\e \int_{0}^{T}\int_{x_{0}-\e}^{x_{0}+\e}\psi_{\e}(x,t)^{2}dxdt \leq C\delta.\]
Therefore, there exists $\varphi \in L^{2}((0,T)\times (-\delta,\delta))$ such that $\varphi_{\e}\rightharpoonup \varphi$ in $L^{2}((0,T)\times (-\delta,\delta))$.

For almost all $t\in (0,T)$, we set 
\[ \psi (t)=\frac{1}{\delta}\int_{-\delta}^{\delta} \varphi (x,t)dx \in L^{2}(0,T)\]
and we prove that $\psi$ is a null control from $x_{0}$ for $u_{0}$ in time $T$, i.e., the function $u$ verifying
\[ \partial_{t}u-\Delta u=\psi (t)\delta_{x_{0}}, \ \  \ \ \ \ \ u_{|t=0}=u_{0}\]
with Dirichlet boundary conditions also satisfies $u_{|t=T}=0$. In other words, $\psi (t)$, which is somehow a limit of the null-controls $\varphi_{\e}$ is also a null-control. The proof goes as follows. Fix $v_{T}\in L^{2}(0,1)$. Let $v\in L^{2}((0,1)\times (0,T))$ be a solution of the backward heat equation
\[ \partial_{t}v+\Delta v=0, \ \ \ v_{|t=T}=v_{T}\]
with Dirichlet boundary conditions. We know that for every $\e>0$, the solution $u_{\e}$ of 
\[ \partial_{t}u_{\e}-\Delta u_{\e}=\psi_{\e}, \ \ \  \ \  u_{|t=0}=u_{0}\]
with Dirichlet boundary conditions also satisfies $u_{\e|t=T}=0$, and therefore 
\[ (\partial_{t}u_{\e},v)-(\Delta u_{\e},v)=(\psi_{\e},v)\]
where the scalar product is the $L^{2}((0,1)\times (0,T))$ scalar product. Integrating by part, using the boundary conditions and the fact that $v$ is a solution of the backward heat equation, we get 
\[ (u_{\e}(\cdot , T),v(\cdot ,T))-(u_{0},v(\cdot ,0))=\int_{0}^{T}\int_{x_{0}-\e}^{x_{0}+\e}\psi_{\e}(x,t)v(x,t)dxdt\] which reduces to
\begin{equation} \label{premeq}-(u_{0},v(\cdot ,0))=\int_{0}^{T}\int_{x_{0}-\e}^{x_{0}+\e}\psi_{\e}(x,t)v(x,t)dxdt.\end{equation}
Similarly, we get
\begin{equation}\label{deuxeq} (u(\cdot , T),v(\cdot ,T))-(u_{0},v(\cdot ,0))=\int_{0}^{T}\psi (t)v(x_{0},t)dt.\end{equation}
Let us prove that $\int_{0}^{T}\int_{x_{0}-\e}^{x_{0}+\e}\psi_{\e}(x,t)v(x,t)dxdt\rightarrow \int_{0}^{T}\psi (t)v(x_{0},t)dt$ when $\varepsilon \rightarrow 0$. We have
\begin{equation}\label{troiseq}\int_{0}^{T}\int_{x_{0}-\e}^{x_{0}+\e}\psi_{\e}(x,t)v(x,t)dxdt=\frac{1}{\delta}\int_{0}^{T}\int_{-\delta}^{\delta}\varphi_{\varepsilon}(x,t)v\left(x_{0}+\frac{\e}{\delta}x,t\right)dxdt=A+B \end{equation}
where
\[ A=\frac{1}{\delta}\int_{0}^{T}\int_{-\delta}^{\delta}\varphi_{\varepsilon}(x,t)v(x_{0},t)dxdt\]
and
\[ B=\frac{1}{\delta}\int_{0}^{T}\int_{-\delta}^{\delta}\varphi_{\varepsilon}(x,t)\left( v\left(x_{0}+\frac{\e}{\delta}x,t\right)-v(x_{0},t)\right)dxdt.\]
Integrating the weak convergence  $\varphi_{\e}\rightharpoonup \varphi$, which holds in $L^{2}((-\delta,\delta)\times (0,T))$, against $\frac{1}{\delta}\mathbf{1}_{(-\delta,\delta)\times (0,T)}v(x_{0},t)$, we get 
\begin{equation}\label{ineqA}A\rightarrow \int_{0}^{T}\psi (t)v(x_{0},t)dt.\end{equation}
For $B$, we prove that $B\rightarrow 0$. The proof goes as follows. We write that 
\begin{equation*}
B^{2}\leq \left(\frac{1}{\delta}\int_{0}^{T}\int_{-\delta}^{\delta}\varphi_{\e}(x,t)^{2}dxdt\right) \left( \frac{1}{\delta}\int_{0}^{T}\int_{-\delta}^{\delta}\left| v\left(x_{0}+\frac{\e}{\delta}x,t\right)-v(x_{0},t)\right|^{2}dxdt\right)
\end{equation*}
and since the first integral is bounded above by a constant $C$, we just have to prove that the second one converges to $0$. We decompose $v$, writing $v(x,t)=\sum a_{j}\sin (j\pi x)e^{-j^{2}\pi^{2}t}$, and we get
 \begin{eqnarray}
&& \int_{0}^{T}\int_{-\delta}^{\delta}\left| v\left(x_{0}+\frac{\e}{\delta}x,t\right) -v(x_{0},t)\right|^{2}dxdt = \frac{\delta}{\e}\int_{0}^{T}\int_{-\e}^{\e}\left| v\left(x_{0}+y,t\right)-v(x_{0},t)\right|^{2}dydt \nonumber \\
&& \qquad  \qquad  \qquad  \qquad \leq \frac{2\delta \| v\|_{\infty}}{\e}\int_{0}^{T}\int_{-\e}^{\e}\left| v\left(x_{0}+y,t\right)-v(x_{0},t)\right|dydt \nonumber \\
&&\qquad  \qquad  \qquad  \qquad =\frac{2\delta \| v\|_{\infty}}{\e}\int_{0}^{T}\int_{-\e}^{\e} \sum |a_{j}|e^{-j^{2}\pi^{2}t}|\sin (j\pi (x_{0}+y))-\sin (j\pi x_{0})|dydt \nonumber \\
&&\qquad  \qquad  \qquad  \qquad \leq \frac{2\delta \| v\|_{\infty}}{\e}\int_{0}^{T}\int_{-\e}^{\e} \sum |a_{j}|j\pi ye^{-j^{2}\pi^{2}t}dydt \nonumber \\
&&\qquad  \qquad  \qquad  \qquad \leq 2\e\delta \| v\|_{\infty}\int_{0}^{T} \sum |a_{j}|j\pi e^{-j^{2}\pi^{2}t}dt \nonumber
 \end{eqnarray}
which goes to $0$ in the limit $\e\rightarrow 0$. Therefore we have obtained 
\begin{equation} \label{ineqB}
B\rightarrow 0.
\end{equation} 
Combining (\ref{premeq}), (\ref{deuxeq}), \ref{troiseq}), (\ref{ineqA}) and (\ref{ineqB}), we finally get $(u(\cdot , T),v_{T})=0.$ Since this is true for all $v_{T}$, we get that $u_{T}=0$, which means that $\psi (t)\delta_{x_{0}}$ is a null-control for $u_{0}$. This is a contradiction. It finishes the proof of Theorem \ref{initialdatum}.

\subsection{Proof of Theorem \ref{thprof}}
Theorem \ref{thprof} follows from the computations done in the proof of Theorem \ref{initialdatum}. As in the proof of Theorem \ref{initialdatum}, if we set
\[ \varphi_{\e}(x,t)=\e \psi_{\e}\left( x_{0}+\frac{\e}{\delta}x,t\right), \ \ \ \ \varphi_{\e} \in L^{2}((-\delta,\delta)\times (0,T))\]
then for $0<\varepsilon < \delta$, we have
\[ \int_{0}^{T}\int_{-\delta}^{\delta}\varphi_{\e}(x,t)^{2}dxdt \leq C\delta\]
and therefore, there exists $\varphi \in L^{2}((-\delta,\delta)\times (0,T))$ such that $\varphi_{\e}\rightharpoonup \varphi$ in $L^{2}((0,T)\times (-\delta,\delta))$.

For almost all $t\in (0,T)$, we finally set 
\[ \psi (t)=\frac{1}{\delta}\int_{-\delta}^{\delta} \varphi (x,t)dx, \ \ \ \ \psi \in L^{2}(0,T)\]
and the proof of Theorem \ref{initialdatum} shows that $\psi$ is a null-control from $x_{0}$ for $u_{0}$ in time $T$. 

As a side remark, note that if $T>T_0$, Theorem \ref{thrate} ensures that the quantity $\varepsilon^{1/2} \| \psi_\varepsilon\|_{L^2}$ is uniformly bounded in $\varepsilon$ when $\psi_\varepsilon$ is the optimal null-control steering $u_0$ to $0$ in time $T$ and with control domain $(x_0-\varepsilon,x_0+\varepsilon)$. Therefore, Theorem \ref{thprof} applies in this case.

\bibliographystyle{alpha}
\bibliography{biblioshrinking}

\end{document}